%% file: sn-article.tex
\documentclass[sn-mathphys-num]{sn-jnl}
\usepackage{graphicx}%
\usepackage{multirow}%
\usepackage{amsmath,amssymb,amsfonts}%
\usepackage{amsthm}%
\usepackage{mathrsfs}%
\usepackage[title]{appendix}%
\usepackage{xcolor}%
\usepackage{textcomp}%
\usepackage{manyfoot}%
\usepackage{booktabs}%
\usepackage{algorithm}%
\usepackage{algorithmicx}%
\usepackage{algpseudocode}%
\usepackage{listings}%
\usepackage{array}
\usepackage{longtable}
\usepackage{todonotes}
\usepackage{diagbox}

\theoremstyle{thmstyleone}%

\newtheorem{theorem}{Theorem}

\theoremstyle{thmstyletwo}%
\newtheorem{example}{Example}%

\theoremstyle{thmstylethree}%
\newtheorem{definition}{Definition}%
\newtheorem{lemma}{Lemma}
\newtheorem{corollary}{Corollary}
\newtheorem{observation}{Observation}

\newcommand{\conv}{\hbox{conv}}

\begin{document}

\title[Efficient exact sequential lifting algorithm for binary knapsack set]
{Efficient exact sequential lifting algorithm for binary knapsack set}

\author[1]{\fnm{Xintong} \sur{Wang}}\email{xintong\_wang@bit.edu.cn}

\author*[2]{\fnm{Liang} \sur{Chen}}\email{chenliang@lsec.cc.ac.cn}

\author[2]{\fnm{Yu-Hong} \sur{Dai}}\email{dyh@lsec.cc.ac.cn}

\affil*[1]{\orgdiv{School of Mathematics and Statistics}, \orgname{Beijing Institute of Technology}, \orgaddress{\city{Beijing}, \postcode{100081}, \country{China}}}

\affil[2]{\orgdiv{Institute of Computational Mathematics and Scientific/Engineering Computing}, \orgname{Academy of Mathematics and Systems Science, Chinese Academy of Sciences}, \orgaddress{\city{Beijing}, \postcode{100190}, \country{China}}}


\abstract{
Lifting is a crucial technique in mixed integer programming (MIP) for generating strong valid inequalities, which serve as cutting planes to improve the branch-and-cut algorithm.
We first propose an exact sequential lifting algorithm for the binary knapsack set, which employs the dominance list structure to remove redundant storage and computation in the dynamic programming (DP) array.
This structure preserves scale invariance and effectively handles constraints with non-integer coefficients. 
Then, a reduction method is developed for the lifting procedure under some conditions, further enhancing computational efficiency.
Finally, numerical experiments demonstrate that the proposed algorithm outperforms DP with arrays in terms of both efficiency and stability, particularly for large-scale and large-capacity instances.
Moreover, it enables exact sequential lifting for binary knapsack sets with non-integer weights and large capacities, making it directly applicable in modern MIP solvers.
}

\keywords{Lifting, Binary knapsack set, Dynamic programming, Exact algorithm, Dominance list}


\maketitle
\input{tex/intro}

\input{tex/formulation}
\input{tex/method}

\input{tex/result}
\input{tex/appendix}
\bibliography{tex/sn-bibliography}

\end{document}

%% file: tex/intro.tex
\section{Introduction}
Consider the binary knapsack set
\begin{equation}\label{kp}
\mathcal{X} = \left\{\boldsymbol{x}\in \{0,1\}^n \,:\,\sum_{i=1}^n a_ix_i \le b\right\}
\end{equation}
where \(a_i \in \mathbb{Q}_+\) denotes the weight of item \(i\) for \(i \in N := \{1, 2, \ldots, n\}\), and \(b \in \mathbb{Q}_+\) is the knapsack capacity.
Without loss of generality, we assume $0< a_{i}\le b$ for all $ i\in N$ and $\sum_{i=1}^n a_{i} >b$.
The convex hull of \(\mathcal{X}\), denoted by
$\mathcal{P} := \conv(\mathcal{X})$, is referred to as the binary knapsack polytope.

Binary knapsack set of the form \eqref{kp} arises as feasible regions in many mixed integer programming (MIP) problems, including the generalized assignment problem \citep{ross1975branch}, the capacitated network location problem \citep{ceselli2009computational} and the multiple knapsack problem \citep{ferreira1996solving}.
Strong valid inequalities for binary knapsack polytope $\mathcal{P}$ \citep{balas1975facets, wolsey1975faces, crowder1983solving, weismantel19970} are widely used as cutting planes in the state-of-the-art MIP solvers and play an important role in tackling general MIP problems.
Empirical studies emphasize the critical importance: disabling methods such as knapsack separation significantly affect the performance of solvers \citep{achterberg2013mixed}. 
Such inequalities reduce the search space by eliminating infeasible solutions, thereby improving computational efficiency.
Therefore, deriving strong valid inequalities for \(\mathcal{X}\) is of both theoretical and practical significance.
However, obtaining them for high dimensional sets remains a challenging problem \citep{fawzi2022lifting}.
To address this challenge, \textit{lifting} is one of primary approaches for generating strong valid inequalities.

The lifting procedure begins with a valid inequality for a projection of the feasible region (obtained by fixing some variables), referred to as the \emph{seed inequality}, and terminates with a valid inequality for the original space by relaxing the fixed variables \citep{wolsey1976facets}.
More precisely, let 
\begin{equation}\label{c}
(C, N_0, N_1)
\end{equation} 
be a partition of the index set  \( {N} \).
Then the fixed knapsack set is defined as 
$\mathcal{X}(N_0, N_1) := \mathcal{X} \cap \left\{\boldsymbol{x} \, : \, x_i = 0, \, \forall\, i \in N_0; \, x_i = 1, \, \forall\, i \in N_1\right\},$ i.e.,
\begin{equation}\nonumber
\begin{aligned}	
\mathcal{X}(N_0, N_1) = \Bigg\{ \boldsymbol{x} \in \{0,1\}^n : &\sum_{i \in C} a_i x_i \le
b - \sum_{i \in N_1} a_i; \, x_i = 0, \, \forall\, i \in N_0; \\
&\, x_i = 1, \, \forall\, i \in N_1 \Bigg\}.
\end{aligned}
\end{equation}
Assuming that $C = \{1, 2, \ldots, \ell\},$
we start with a seed inequality of the form
\begin{equation}\label{seed}
\sum_{i\in C} \alpha_i x_i \le \beta^\ell,
\end{equation}
which is valid for the convex hull of the set \( \mathcal{X}(N_0, N_1) \). 
The procedure of lifting $x_i$ for $i\in N_0$ is referred to as \emph{up lifting}, 
while lifting $x_i$ for $i\in N_1$ is called \emph{down lifting}.
Note that both up and down lifting can be applied jointly to a single inequality \citep{wolsey1999integer}.
The final lifted inequality then takes the form 
$$\sum_{i\in C} \alpha_i x_i + \sum_{i \in N_0} \alpha_ix_i+\sum_{i \in N_1} \alpha_ix_i\le \beta^\ell + \sum_{i \in N_1} \alpha_i,$$ 
which is valid for original knapsack polytope $\mathcal{P}$.

\subsection{Literature review}
Lifting was first introduced by \citet{gomory1969some} in the context of the group problem.
In the mid-1970s, lifting gained significant attention when it was applied to structured MIP problems, such as the knapsack problem \citep{balas1975facets}, independence systems problem \citep{,hammer1975facet}, and vertex packing peoblem \citep{padberg1973facial, nemhauser1974properties}.
One important class of valid inequalities is the lifted cover inequality (LCI) \citep{balas1975facets}.
\citet{balas1978facets} provided a complete characterization of the facet-defining inequalities obtained from minimal covers, and subsequent studies proposed various generalizations of LCIs \citep{zemel1978lifting, wolsey1999integer, gu2000sequence, easton2008simultaneously, letchford2019lifted}.
Both recognizing facet-defining inequalities and computing general LCIs are known to be $\mathcal{NP}$-hard \citep{hartvigsen1992complexity, chen2021complexity}.
\citet{padberg1975note} identified lifting as a generic computational tool for generating strong inequalities for binary integer programs.
The method was later formalized and extended to unstructured integer programs \citep{wolsey1976facets, wolsey1977valid, zemel1978lifting, balas1978facets, crowder1983solving, gu1999lifted2, gu2000sequence};
and generalized to broader settings, including general integer variables  \citep{ceria1998cutting}, relaxation with continuous variables \citep{marchand19990, richard2002lifted, richard2003lifted, atamturk2004sequence};
as well as specialized global optimization problems such as the multidimensional knapsack problem \citep{kaparis2008local}, the infinite group problem \citep{richard2009valid, dey2010constrained, basu2012unique, basu2019nonunique};
and families of valid inequalities for various sets, including hypomatchable inequalities \citep{martin1999integer}, complementarity constraints \citep{de2002facets}, two-integer knapsack inequalities \citep{agra2007lifting}, lifted tableaux inequalities \citep{narisetty2011lifted}, disjoint cardinality constraints \citep{zeng2011polyhedral, zeng2011polyhedral2}, and mixing inequalities \citep{dey2010composite}. 
Furthermore, fewer studies have extended lifting techniques from MIP problems to linear programs \citep{hoffman1991improving, dey2010composite}, nonlinear programs \citep{richard2010lifting, gomez2018submodularity, nguyen2018deriving}, nonconvex quadratic programs \citep{vandenbussche2005polyhedral, lin2008box}, conic integer programming \citep{atamturk2011lifting}, and mixed integer bilinear programs \citep{gupte2012mixed, chung2014lifted, gu2023lifting}.

Lifting technique can be classified into sequential and simultaneous.
\textit{Sequential lifting} modifies one coefficient (and possibly the capacity) at a time, which played a pivotal role in strengthening branch-and-cut algorithms for binary integer programs, particularly knapsack problems \citep{wolsey1975faces, wolsey1976facets, crowder1983solving,  zemel1989easily, weismantel19970, gu1998lifted, kaparis2010separation, zeng2011polyhedral}. 
It has also been extensively used to derive facet-defining inequalities in other contexts \citep{padberg1973facial, nemhauser1974properties, balas1975facets, hammer1975facet, balas1978facets,  marchand19990, richard2002lifted, van2002polyhedral, richard2003lifted, atamturk2003facets}.
\textit{Simultaneous lifting}, or \textit{sequence independent lifting}, in contrast, ensures coefficients are independent of the lifting order and often relies on superadditive functions.
\citet{wolsey1977valid} established its connection with superadditive functions, 
while \citet{gu1994lifted} and \citet{atamturk2004sequence} extended it to mixed binary and mixed integer linear programs, respectively.
In this paper we focus on sequential lifting, for further developments in simultaneous lifting, see \citep{gu1998lifted, gu1999lifted, gu1999lifted2, gu2000sequence, atamturk2003facets, shebalov2006sequence, easton2008simultaneously, kubik2009simultaneously}.

In practice, lifting generally involves solving optimization problems, leading to a trade-off between computational efficiency and the strength of the resulting inequalities.
\emph{Exact lifting} computes optimal coefficients, preserving facet-defining properties but requiring substantial computational effort \citep{wolsey1975faces, zemel1978lifting, easton2008simultaneously}. 
If the seed inequalities are facets in a lower-dimensional space, exact lifting preserves this property in higher dimensions \citep{padberg1975note, wolsey1999integer}. 
\emph{Approximate lifting}, in contrast, generates valid but not necessarily facet-defining inequalities. 
Although theoretically weaker, it is computationally efficient and particularly suitable for large-scale MIP problems \citep{balas1975facets, gu2000sequence}. 
Today, approximate lifting method is standard in major MIP solvers, including HiGHS \citep{huangfu2018parallelizing}, CBC \citep{cbc} and SCIP \citep{bolusani2024scip}, due to its computational efficiency.
These implementations typically rely on superadditive functions for efficiency, though the resulting coefficients may be insufficiently strong to define facets. 
Consequently, while approximate lifting offers significant practical benefits, it sacrifices some theoretical strength, potentially limiting the effectiveness of branch-and-cut frameworks.

\subsection{Motivation and main contributions}
Binary knapsack set frequently arises in industrial applications, such as the generalized assignment problem \citep{savelsbergh1997branch}, scheduling problem \citep{potts1988algorithms}, and bandwidth packing problem \citep{han2013exact}.
However, these problems often involve non-integer weights or extremely large capacities, which make significant computational challenges for dynamic programming (DP) methods used for exact lifting.
Meanwhile, current MIP solvers typically rely on approximate lifting algorithms to solve large-scale practical problems.
The motivation of this work is to develop an exact sequential lifting algorithm for binary knapsack set to address these challenges and improve computational efficiency.

The main contributions of this paper are summarized as follows.
First, we propose an exact sequential lifting algorithm for the binary knapsack set that exploits dominance relationship through a dominance list structure.
This algorithm eliminates redundant storage during the lifting process and enables the effective handling of non-integer weights and large capacities. 
Moreover, the proposed algorithm exhibits scale invariance, ensuring stable computational performance when the weights and capacity are proportionally scaled.
Second, we introduce a reduction method applicable to both dominance lists and DP arrays under some conditions, which further improves computational efficiency. 
Finally, we present numerical experiments across various settings, which confirm the effectiveness and scale invariance of the proposed algorithm.
Moreover, it enables exact sequential lifting for binary knapsack sets with non-integer weights and large capacities, making it suitable for integration into modern MIP solvers.

The remainder of the paper is organized as follows. 
Section \ref{sec2} reviews related work on exact sequential lifting and DP with arrays for binary knapsack set.
Section \ref{sec3} introduces the concept of the dominance list, presents the reduction method, and proposes the exact sequential lifting algorithm.
Section \ref{numerial} presents the computational experiments.
Finally, Section \ref{summary} concludes the paper and outlines future research directions.

%% file: tex/formulation.tex
\section{Sequential lifting on binary knapsack set}\label{sec2}
Sequential lifting is a procedure in which the projected variables are lifted one by one according to a given specified order. 
The resulting inequality highly depends on this order, since different sequences may yield different lifting coefficients.
At each lifting step, the coefficient is determined by solving an individual optimization problem that provides the strongest valid value.
It is important to note that if the seed inequality \eqref{seed} defines the facet of \( \conv(\mathcal{X}(N_0, N_1)) \), the final inequality will define the facet of $\conv(\mathcal{X})$.

Let the lifting sequence be \( \{\ell+1, \ldots, n\} \), with $C = \{1,\ldots,\ell\},~N_0^\ell = N_0$ and $N_1^\ell = N_1$ (the partition of $N$ in \eqref{c}).
Suppose that the variables $x_{\ell+1}, \ldots, x_k$ have already been lifted for some $k>\ell$. 
The inequality
$$\sum_{i=1}^k \alpha_i x_i\le \beta^k$$
is valid for the set
\begin{equation}\nonumber
\begin{aligned}
\mathcal{X}(N_0^k,N_1^k)=\Bigg\{\boldsymbol{x} \in \{0,1\}^n\,:\, &\sum_{i = 1}^ka_ix_i \le b^k;\, x_i = 0,\,\forall\,i \in N_0^k;\\
&\,x_i = 1,\,\forall\, i \in N_1^k \Bigg\},
\end{aligned}
\end{equation}
where $N_0^k=N_0\backslash \{\ell+1,\ldots,k\}$, $N_1^k=N_1\backslash \{\ell+1,\ldots,k\}$, $\beta^k = \beta^\ell + \sum_{i\in (N_1\backslash N_1^k)} \alpha_i$ and $b^k=b - \sum_{i\in  N_1^k} a_i$.
Here, $N_0^k$ and $N_1^k$ denote the sets of variables fixed to $0$ and $1$, respectively, at the lifting step $k$.

To simplify the expression of lifting process, we introduce $F_k(z)$ to represent the following knapsack problem (KP):
\begin{equation}\label{fkz}
F_k(z) = \max_{\boldsymbol{x} \in \{0,1\}^k}\left\{\sum_{i=1}^k \alpha_i x_i:\sum_{i=1}^k a_i x_i \le z\right\},
\end{equation}
which involves the first $k$ items and capacity $z\le b$.
Now, we proceed to lift the $(k+1)$-th variable of the knapsack set in \eqref{kp}.
We categorize the lifting process for variable $x_{k+1}$ into two cases:
\begin{itemize}
	\item Up lifting. 
	If \( k+1 \in N_0^k \), the lifted inequality is
 $\sum_{i=1}^k \alpha_i x_i + \alpha_{k+1} x_{k+1} \le \beta^k$ 
with coefficient
\begin{equation}\label{uplift}
\begin{aligned}
\alpha_{k+1} &= \min_{\boldsymbol{x} \in \{0,1\}^k} \left\{\beta^k - \sum_{i=1}^k \alpha_i x_i : \, \sum_{i=1}^k a_i x_i \le b^k - a_{k+1}\right\}\\
&= \beta^k - F_k(b^k - a_{k+1}).
\end{aligned}
\end{equation}
Parameters update as
$N_0^{k+1} = N_0^k\backslash \{k+1\},~N_1^{k+1} = N_1^k,~b^{k+1} = b^k$ and $\beta^{k+1} = \beta^k.$
Note that if \(b^k < a_{k+1}\), the problem is infeasible and we set \(\alpha_{k+1} = 0\).
	\item Down lifting. 
	If \( k+1 \in N_1^k \), the lifted inequality is $\sum_{i=1}^k \alpha_i x_i + \alpha_{k+1} (x_{k+1} - 1) \le \beta^k,$
and coefficient
\begin{equation}\label{downlift}
\begin{aligned}
\alpha_{k+1} &= \max_{\boldsymbol{x} \in \{0,1\}^k} \left\{ \sum_{i=1}^k \alpha_i x_i - \beta^k :\, \sum_{i=1}^k a_i x_i \le b^k + a_{k+1}\right\}\\
&= F_k(b^k + a_{k+1}) - \beta^k.
\end{aligned}
\end{equation}
Parameters update as
$N_0^{k+1} = N_0^k,~ N_1^{k+1} = N_1^k \setminus \{k+1\},~b^{k+1} = b^k + a_{k+1}$ and $\beta^{k+1} = \beta^k +\alpha_{k+1}.$
Unlike up lifting, this problem is always feasible since \( b^k + a_{k+1} > 0 \).
\end{itemize}
Then we can get valid inequality $\sum_{i=1}^{k+1} \alpha_i x_i\le \beta^{k+1}$ for $\mathcal{X}(N_0^{k+1},N_1^{k+1})$.
The procedure proceeds iteratively, following the same lifting steps, until a valid inequality for $\mathcal{X}$ is derived.

The sequential lifting process generally requires solving $\left(|N_0| + |N_1| \right)$ KPs.
Since each step only differs from the previous one by a single item, \citet{vasilyev2016implementation} proposed a method to avoid redundant computations across iterations, rather than solving each problem independently.

The DP method computes optimal values $F_k(z)$ for the first $k$ items and each capacity $z \in [0,b]$. 
This allows direct evaluation of whether to include item \( (k+1) \), leading to the calculation of \( F_{k+1}(z) \).
Initializing \(F_0(z) = 0\) for all \(z \in [0,\,b ]\), the recursive formulation is given by
\begin{equation}\label{bellman}
\begin{aligned}
F_{k+1}(z) = 
\left\{\begin{aligned}
&F_{k}(z), &\text{if } z < a_{k+1};\\
&\max\{ F_{k}(z), F_{k}(z-a_{k+1}) + \alpha_{ k+1}\},&\text{if } z \ge a_{k+1}.
\end{aligned}
\right.
\end{aligned}
\end{equation}
This is the Bellman recursion \citep{bellman1957comment}, which is fundamental to the DP method for computing $F_n(b)$.
At iteration $k$, given $F_k(z)$ for all $z\in [0,\,b]$, the lifting coefficient $\alpha_{k+1}$ can be obtained from \eqref{uplift} or \eqref{downlift}.
After determining $a_{k+1}$ and $\alpha_{k+1}$, the recursion in \eqref{bellman} is used to update $F_{k+1}(z)$ for all $z\in [0,b]$. 
If all $a_i \in \mathbb{Z}$ and $b\in \mathbb{Z}$, the overall time complexity of the DP algorithm is \(\mathcal{O}(nb)\), and the space complexity is $O(b)$.
Since it is not necessary to store the full solution vector or intermediate results at each iteration, the memory requirement does not exceed $\mathcal{O}(b)$.

The following example provides the lifting procedure using DP with arrays.
\begin{example}\label{ex}
		Consider the binary knapsack set
	\begin{equation}\nonumber
	\mathcal{X} =\left\{\boldsymbol{x} \in \{0,1\}^7\,:\, 3x_1+4x_2+5x_3+4x_4+2x_5	+3x_6+6x_7\le 18\right\}.
	\end{equation}
	Let $C=\{1,2,3\},~N_0 = \{4,6\},~N_1=\{5,7\}$ and the lifting sequence $\{4,5,6,7\}$. 
	The seed inequality $x_1+x_2+x_3 \le 2$ is valid for the set
	\begin{equation}\nonumber
	\begin{aligned}
	\mathcal{X}(N_0,N_1) =\Big\{ \boldsymbol{x} \in \{0,1\}^n\,:\,
	&3x_1+4x_2+5x_3 	\le 18 - (2+6)=10,\\
& x_4=0,\, x_5=1,\,x_6=0,\,x_7 =1\Big\}.
	\end{aligned}
	\end{equation}
	We now have 
	$N_0^3 =\{4,6\}, ~N_1^3=\{5,7\},~b^3= 10,~\beta^3 = 2$.
	 Assume the values of $F_3(z)$ for all \( z =[ 0, \, 18] \) are precomputed as shown in Table \ref{table1}.
	When variable $x_4\in N_0^3$ is up-lifted, applying \eqref{uplift} gives the coefficient $\alpha_4=\beta^3 - F_3(b^3 - a_{4}) = 2-F_3(6)= 1$. 
	 Then the value of \( F_4(z) \) is obtained by applying the DP recursion \eqref{bellman}, as shown in Table \ref{table2}.
		\begin{table*}[h]
				\caption{DP array of $(z,F_3(z))$}
			\centering
		\begin{tabular}{|c|c|c|c|c|c|c|c|c|c|c|}
			\hline
			$z$ & \textcolor{blue}{0} & 1 & 2 & \textcolor{blue}{3} &4& 5 & 6 &  \textcolor{blue}{7}& {8} &9\\ \hline
			$F_3(z)$ & \textcolor{blue}{0} & 0 & 0 &  \textcolor{blue}{1} & 1&1 &  1 &\textcolor{blue}{2}&2 & 2 \\ \hline
			$z$ &10 & 11  &  \textcolor{blue}{12} & 13 & 14 & {15}& 16 & 17  &18&\\ \hline
			$F_3(z)$ & 2& 2&  \textcolor{blue}{3} & 3 & 3&3 &3 & 3&3 & \\ \hline
		\end{tabular}
		\label{table1}
	\end{table*}

		\begin{table*}[h]
			\centering
				\caption{DP array of $(z,F_4(z))$}
			\begin{tabular}{|c|c|c|c|c|c|c|c|c|c|c|}
			\hline
			$z$ & \textcolor{blue}{0} & 1 & 2 & \textcolor{blue}{3} &4& 5 & 6 &  \textcolor{blue}{7}& 8 &9\\ \hline
			$F_4(z)$ & \textcolor{blue}{0} & 0 & 0 &  \textcolor{blue}{1} & 1&1 &  1& \textcolor{blue}{2} &2& 2 \\ \hline
			$z$ &10 &  \textcolor{blue}{11} & {12} & 13 & 14 & 15 & \textcolor{blue}{16} & 17  &18&\\ \hline
			$F_4(z)$ & 2&  \textcolor{blue}{3}&3& 3 & 3 & 3  &\textcolor{blue}{4} & 4&4 & \\ \hline
		\end{tabular}
		\label{table2}
	\end{table*}
	Updating the parameters gives $N_0^4 = \{6\},~ N_1^4=\{5,7\},~ b^4 = 10$ and $\beta^4 = 2$.
	Therefore, the valid inequality for the set $\mathcal{X}(N_0^4,N_1^4)$ becomes $x_1+x_2+x_3 +x_4\le 2$.
\end{example}

The DP method typically requires retaining the entire array, which is often impractical in the lifting process, particularly for large capacities.
Example~\ref{ex} illustrates two drawbacks of using DP with arrays:
(i)  as shown by the blue columns in Tables \ref{table1} and \ref{table2}, 
all values $F_3(z)$ and $F_4(z)$ for $z \le 18$ can be recovered using only the values in the blue entries, making the maintenance of the remaining (black) columns redundant;
(ii)  the DP array requires that both weights and capacity to be integers, which is a limitation of using the array data structure.

Therefore, our main idea is to replace the full DP array with a compact list that records only the breakpoints where the array values increase. 
Specifically, we focus on the columns where the values increase (i.e. the blue columns in Tables \ref{table1} and \ref{table2}) in the following discussion.

%% file: tex/method.tex
\section{Sequential lifting with dominance list}\label{sec3}
In this section, we exploit dominance list structure into the sequential lifting process. 
In Section \ref{definition}, we give the definitions of state and dominance list. 
Then we introduce the algorithm for merging the dominance list in Section \ref{generation}.
Section \ref{reduction} introduces the reduction method that may further shorten the length of the dominance list and DP array.
Finally, Section \ref{lifting} integrates these components into the complete framework of an exact sequential lifting algorithm.

\subsection{State and dominance list}\label{definition}
For the sake of completeness, we recall the following definition as given by \citet{kellerer2004multidimensional}.
\begin{definition}\label{def:state}
	 For the KP $F_k(z)$ in \eqref{fkz}, each pair $(z, F_k(z))$ in the DP array is called a \textit{state} $(w, p)$, 
	where $w = \sum_{i=1}^k a_i x_i$ and $p = \sum_{i=1}^k \alpha_i x_i$ for some $x \in \{0,1\}^k$.
\end{definition}

For each $k= 1, \ldots, n$, the set of states can be expressed as a list
\begin{equation}\label{List}
\bar{L}_k = \left\{(w,p):\, w = \sum_{i=1}^k a_ix_i,~p = \sum_{i=1}^k \alpha_ix_i,~\forall~x\in\{0,1\}^k\right\}.
\end{equation}
The list can be pruned by exploiting dominance.

\begin{definition}\label{dom}
	For  $(w',p'),\,(w'',p'')\in \bar{L}_k$, we say that state $(w',p')$ dominates state $(w'',p'')$ if $w' <w''$ and $p'\ge p''$ or if $w' \le  w''$ and $p' > p''$.
\end{definition}
A dominated state cannot contribute to any optimal solution, which motivates the following definition.

\begin{definition}\label{def:DT}
	A \textit{dominance list} $L_k$ is defined as
	\begin{equation}\nonumber
	\begin{aligned}
	L_k = \bar{L}_k\backslash\Big\{(w,p)\in \bar{L}_k:\, &(w,p) \text{ is dominated by } (w',p') \text{ for some } \\
	&(w',p')\in \bar{L}_k\Big\}, 
	\end{aligned}
	\end{equation}
	where $\bar{L}_k$ is a list in \eqref{List}.
\end{definition}

In the dominance list $L_k$, all states are mutually non-dominated, which implies that the states can be ordered, as shown below.

\begin{lemma}
	For any two distinct states $(w',p')$ and $(w'',p'')$ in a dominance list $L_k$, either $w'<w''$ and $p'<p''$ or $w'>w''$ and $p'>p''$.\label{order}
\end{lemma}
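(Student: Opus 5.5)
The argument is a direct case analysis built on Definition~\ref{dom} and Definition~\ref{def:DT}. Take two distinct states $(w',p')$ and $(w'',p'')$ in $L_k$. By Definition~\ref{def:DT}, neither state is dominated by any state of $\bar{L}_k$. In particular, neither dominates the other, since $L_k\subseteq\bar{L}_k$. The goal is to show that this mutual non-domination forces the two strict orderings stated in Lemma~\ref{order}.

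First I treat equal weights. Suppose $w'=w''$. The states are distinct, so $p'\neq p''$; by symmetry assume $p'>p''$. Then $w'\le w''$ and $p'>p''$, so $(w',p')$ dominates $(w'',p'')$ by the second clause of Definition~\ref{dom}. This is a contradiction, so $w'\neq w''$. By symmetry of the statement I may now assume $w'<w''$, and it remains to show $p'<p''$. If instead $p'\ge p''$, then $w'<w''$ together with $p'\ge p''$ is exactly the first clause of Definition~\ref{dom}. So $(w',p')$ would dominate $(w'',p'')$, again a contradiction. Hence $p'<p''$. The case $w'>w''$ is the same argument with the roles of the two states exchanged, and it gives $p'>p''$.

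The only subtle point is what ``distinct'' means. $\bar{L}_k$ is a set of pairs $(w,p)$, so different vectors $x$ producing the same pair give one element, and distinct elements really differ in at least one coordinate. I will state this interpretation explicitly, since the equal-weight step relies on it. Beyond that there is no genuine obstacle; the proof is a two-line contradiction in each case.
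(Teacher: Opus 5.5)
Your proof is correct and follows the paper's argument. Both proceed by case analysis, showing that every configuration other than the two strict orderings makes one state dominate the other, which contradicts Definition~\ref{def:DT}; your remark that distinct states differ in at least one coordinate only makes explicit the case $w'=w''$, $p'=p''$ that the paper leaves implicit.
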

\begin{proof}
	 We only prove that there are no state dominated by other different state.
	 \begin{itemize}
	 	\item If $w' = w''$ and $ p' < p'' $, then $(w', p')$ is dominated by $(w'',p'')$.
	 	\item If $w' = w''$ and $ p' > p'' $, then $(w'', p'')$ is dominated by $(w',p')$.
	 	\item If $w' < w''$ and $ p' \geq p'' $, then $(w'', p'')$ is dominated by $(w',p')$.
	 	\item If $w' > w''$ and $ p' \leq p'' $, then $ (w', p') $ is dominated by $ (w'', p'') $.
	 \end{itemize}
	 The remaining cases are  $w'<w''$ and $p'<p''$ or $w'>w''$ and $p'>p''$.\qed
\end{proof}

Lemma~\ref{order} shows that any two distinct states in the dominance list $L_k$ satisfy a strict order.
Therefore, the dominance list $L_k$ can be written as
\begin{equation}\nonumber
	L_k = \left\{(w^j,p^j)\right\}^m_{j=1},\label{Lk} 
\end{equation}
which satisfies that 
\begin{equation}
	w^j < w^{j+1}~\text{and}~p^j < p^{j+1},~ \forall\,j = 1,\ldots,m-1.\label{ascending} 
\end{equation}
Hence, each dominance list is strictly increasing in both $w$ and $p$.
\begin{theorem}
	Let $L_k$ be the dominance list of $F_k(z)$ for all $z\in [0,b]$. 
	Then, for any $\bar{z}\in [0,b]$, 
	\begin{equation}
		F_k(\bar{z}) = \left\{\begin{array}{ll}
			p^j,& \text{if }~\bar{z} \in [w^j, w^{j+1}),~j=1,\ldots,m-1;\\
			p^m,& \text{if }~\bar{z} \in [w^m, b].\\
		\end{array}\right.
	\end{equation}
\end{theorem}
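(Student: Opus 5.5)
The proof compares $F_k(\bar z)$ with the largest profit among the states of $L_k$ whose weight is at most $\bar z$. By \eqref{ascending}, that largest profit is exactly the value the formula assigns.

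First I would note that $(0,0)\in \bar L_k$, coming from $x=0$. Since all $\alpha_i\ge 0$ in the lifting setting, no state of weight $0$ has positive profit exceeding it in a way that removes weight $0$, so some state of weight $0$ survives in $L_k$. Hence $w^1=0$, and every $\bar z\in[0,b]$ lies in exactly one of the intervals $[w^j,w^{j+1})$ or $[w^m,b]$. If nonnegativity of the $\alpha_i$ is in doubt, I would argue directly: the minimal-weight element of any dominance chain is undominated, so $w^1=0$ still holds. Fix $\bar z$ and let $j$ be the index of its interval. Then $w^j\le \bar z$, and by the strict ordering \eqref{ascending} every $w^{j'}$ with $j'>j$ exceeds $\bar z$.

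\emph{Lower bound.} The state $(w^j,p^j)$ comes from some $x\in\{0,1\}^k$ with $\sum a_ix_i=w^j\le\bar z$. This $x$ is feasible for \eqref{fkz}, so $F_k(\bar z)\ge p^j$.

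\emph{Upper bound.} Let $x^*$ be optimal for $F_k(\bar z)$. It gives a state $(w^*,p^*)\in\bar L_k$ with $w^*\le\bar z$ and $p^*=F_k(\bar z)$. The key step is to show that some $(w',p')\in L_k$ has $w'\le w^*$ and $p'\ge p^*$. If $(w^*,p^*)\in L_k$, take it. Otherwise it is dominated by some state; that state is either in $L_k$ or itself dominated, and so on.

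This chain is the main obstacle: I must rule out cycling and guarantee termination. Domination gives $w'\le w''$ and $p'\ge p''$, with at least one inequality strict, so it is a strict partial order on the finite set $\bar L_k$. Every chain is therefore finite and ends at an undominated state, that is, one in $L_k$. Transitivity of the weak inequalities preserves $w'\le w^*$ and $p'\ge p^*$ along the chain.

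That element is some $(w^{j'},p^{j'})$ with $w^{j'}\le w^*\le \bar z$, which forces $j'\le j$. By \eqref{ascending}, $p^{j'}\le p^j$, so $F_k(\bar z)=p^*\le p^{j'}\le p^j$. Combined with the lower bound, $F_k(\bar z)=p^j$, with the last case $j=m$ handled identically.
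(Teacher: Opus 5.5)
Your proof is correct and follows the paper's route: a lower bound from the feasibility of $(w^j,p^j)$, and an upper bound obtained by passing from a feasible state of larger profit (your optimal state) to a dominating member of $L_k$, then invoking the strict ordering \eqref{ascending}. Your version is slightly more careful than the paper's on two points it takes for granted: you show an undominated dominator exists because dominance is a strict partial order on a finite set, and you check that $w^1=0$ so the intervals cover $[0,b]$; for the latter, the cleanest reason is that weights are nonnegative, so a weight-$0$ state of maximal profit cannot be dominated, and the assumption $\alpha_i\ge 0$ is not needed.
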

\begin{proof}
	For any \( \bar{z} \in [w^j, w^{j+1}) , ~j\in\{1,\ldots,m-1\}\), let $(w^j, p^j)\in L_k$ be the corresponding state by the Definition \ref{def:state}.
	Since \( w^j \leq \bar{z} \), this state is feasible for $F_k(\bar{z})$, so $F_k(\bar{z}) \geq p^j$.
	If $F_k(\bar{z}) > p^j$, then there exists a feasible state $(w',p')$ with $w' \leq \bar{z}$ and $p' > p^j$.  
	\begin{itemize}
		\item [1)] If \( (w',p')\in L_k\), 	then $w' < w^j$ and $p' > p^j$, contradicting the definition of dominance list.  
		\item [2)] If \( (w',p')\notin L_k\), it is dominated by some $(w'',p'') \in L_k$ with $w'' \leq w'$ and $p'' \geq p' > p^j$, reducing to the case 1).  
	\end{itemize}
	Hence, $F_k(\bar{z})=p^j$ for all $\bar{z}\in[w^j,w^{j+1})$.  
	
	The argument for $\bar{z}\in[w^m,b]$ is similar. 
	Since \( \bar{z} \geq w^m \), the state \( (w^m, p^m) \) is feasible, and any feasible solution with value larger than $p^m$ would again contradict the dominance property. 
	Therefore, $F_k(\bar{z})=p^m$.
	\qed
\end{proof}

Consequently, the dominance list provides a compact representation of the DP array by eliminating dominated entries.

\begin{example}
	Consider the set $\mathcal{X}$ from Example \ref{ex}.
	Applying the dominance properties described above, Table~\ref{table1} can be reduced to the dominance list of $L_3$ in Table~\ref{table4}.
		\begin{table}[!htb]
		\centering
		\caption{Dominance list of $L_3$}\label{table4}
		\renewcommand{\arraystretch}{1.5}
			\begin{tabular}{|c|c|c|c|c|}
			\hline
			$w$ & \textcolor{blue}{0} & \textcolor{blue}{3}& \textcolor{blue}{7}&  \textcolor{blue}{12}\\ \hline
			$p$ & \textcolor{blue}{0}  &  \textcolor{blue}{1} & \textcolor{blue}{2} & \textcolor{blue}{3}\\ \hline
		\end{tabular}
	\end{table}
\end{example}

\subsection{Dominance list iteration} \label{generation}
We now introduce the generation of $L_k$ for $k=1,\ldots,n$.
Initialize the dominance list as $L_{0}=\{(0,0)\}$.
Suppose the dominance list $L_{k-1}=\left\{(w^1,p^1),\ldots,(w^m,p^m)\right\}$. 
Our goal is to construct the dominance list $L_{k}$ for item $k$ with weight $a_{k}$ and coefficient $\alpha_{k}$.
Without loss of generality, assume $a_{k}>0,~\alpha_{k}>0$.
It is straightforward to verify that if $\alpha_{k} = 0$, then $L_{k}=L_{k-1}$;
if $a_{k} = 0$, then $L_{k}= \left\{ (w^1, p^1+\alpha_{k}),\ldots,(w^{m},p^{m}+\alpha_{k})\right\}$.

In the general case, $L_k$ is obtained from $L_{k-1}$ in two steps.
First, each state in $L_{k-1}$ is added by $(a_k,\alpha_k)$ to form a new list $L_{k-1}'$, i.e.,
\begin{equation}\nonumber
\begin{aligned}
L_{k-1}':= & L_{k-1} \oplus (a_k, \alpha_k) \\
= &\left\{(w^1 + a_k, p^1+ \alpha_k), \ldots,(w^m+a_k,p^m+ \alpha_k)\right\}.
\end{aligned}
\end{equation}
Second, the two lists $L_{k-1}$ and $L_{k-1}'$ are merged to form the new list $L_k$. 
Formally, merging two dominance lists \( L_{k-1} \) and \( L'_{k-1} \) into a new list \( L_k \) under the capacity limit, proceeds as follows:
\begin{enumerate}
	\item \textbf{Initialization}: Begin with the initial state \( (w', p') := (0, -\infty) \) in \( L_k \), which serves to track the most recent non-dominated state.
	\item \textbf{Iterative merging}: 
	Traverse and evaluate both lists $L_{k-1}$ and $L'_{k-1}$ in order of increasing weight. 
	Discard any state $(w,p)$ with $w>b$.
	\item \textbf{Dominance check}: 
	For each candidate state $(w,p)$ with $w \geq w'$:
	\begin{itemize}
		\item \textbf{Case 1}: If \( p \leq p' \), then \( (w, p) \) is dominated or identical to \( (w', p') \).
		In this case, \( (w, p) \) is discarded.  
		\item \textbf{Case 2}: If \( p > p' \) and \( w = w' \), then \( (w', p') \) is dominated by \( (w, p) \). 
		Replace $(w',p')$ with $(w,p)$ in $L_k$.  
		\item \textbf{Case 3}: If \( p > p' \) and \( w > w' \), then $(w,p)$ is non-dominated and added to $L_k$.  
	\end{itemize}
	\item \textbf{Output}: The resulting list \( L_k \) contains only feasible and non-dominated states within capacity.
\end{enumerate}
This merging procedure ensures that $L_k$ is compact and contains only non-dominated states, guaranteeing efficiency and correctness in subsequent iterations.
It is important to note that each dominance list is totally ordered \eqref{ascending}. 
This structural property promotes efficient merging by reducing unnecessary comparisons, as detailed in Alg. \ref{alg:merge}.

\begin{algorithm}[H]
	\caption{Merge dominance lists}
	\label{alg:merge}
	\begin{algorithmic}[1]
		\Require Dominance list $L_{k-1}=\left\{(w^1,p^1),\ldots,(w^m, p^m)\right\}$, weight $a_k$, lifting coefficient $\alpha_k$ and capacity $b$.
		\Ensure Updated dominance list $L_k=\left\{(w^1,p^1),\ldots,(w^s, p^s)\right\}$.
		\State Initialize $i \leftarrow 1$, $j \leftarrow 1$, $s \gets 1$, $(w^s,p^s):= (w^1, p^1)$
		\While{$i \le m$ \textbf{and} $j \le m$ \textbf{and} $w^s \le b$ }
		\If{$w^i \leq (w^j + a_k)$}
		\If{$p^i >p^s$}
		\If{$w^i > w^s$}
		\State $s\leftarrow s+1$
		\EndIf
		\State $w^s \leftarrow w^i,~ p^s \leftarrow p^i$
		\EndIf
		\State $i \leftarrow i+1$
	\Else
		\If{$(p^j + \alpha_k) > p^s$}
	\If{$(w^j + a_k)> w^s$}
	\State $s\leftarrow s+1$
	\EndIf
	\State $w^s \leftarrow(w^j + a_k),~ p^s \leftarrow (p^j + \alpha_k)$
	\EndIf
	\State $j \leftarrow j + 1$
	\EndIf
		\EndWhile
			\While{$j \le m$ \textbf{and} $w^s \le b$ }
			\If{$(p^j + \alpha_k) > p^s$ }
			\If{$(w^j + a_k) > w^s$}
			\State $s\leftarrow s+1$
			\EndIf
			\State $w^s \leftarrow (w^j + a_k),~ p^s \leftarrow (p^j + \alpha_k)$
			\EndIf
			\State $j \leftarrow j + 1$
			\EndWhile
		\If{$w^s> b$}
		\State $s \leftarrow s-1$
		\EndIf
	\end{algorithmic}
\end{algorithm}

\begin{example}
	Consider the set $\mathcal{X}$ from Example \ref{ex}.
	Using Alg. \ref{alg:merge}, we merge the dominance list $L_3$ with list  $L_3\oplus (a_4,\alpha_4)$, as shown in Tables \ref{table5} and \ref{table6}, respectively.
	Table \ref{table7} shows the resulting updated dominance list $L_4$, which shows a shorter length compared to the DP array of \( (z, F_4(z)) \) in Table \ref{table2}.
	\begin{table*}[h]
		\centering
		\begin{minipage}[c]{0.5\textwidth}
			\caption{Dominance list of $L_3$}\label{table5}
			\centering
			\begin{tabular}{|c|c|c|c|c|}
				\hline
				$z$ & {0} &{3}& {7}&  {12}\\ \hline
				$F_3(z)$ & {0}  &  {1} &{2} & {3}\\ \hline
			\end{tabular}
		\end{minipage}\hfill
		\begin{minipage}[c]{0.5\textwidth}
			\caption{List of $L_3\oplus (a_4,\alpha_4)$}\label{table6}
			\centering
			\begin{tabular}{|c|c|c|c|c|}
				\hline
				$z+a_4$ & {4} & {7}&{11}& 16\\ \hline
				$F_3(z)+ \alpha_4$ & {1}  &{2} & {3} &{4}\\ \hline
			\end{tabular}
		\end{minipage}		
	\end{table*}
	
	\begin{table*}[h]
		\caption{Dominance list of $L_4$}\label{table7}
		\centering
		\renewcommand{\arraystretch}{1.5}
		\begin{tabular}{|c|c|c|c|c|c|c|c|}
			\hline
			$z$ & 0& 3&(4)&7&11&(12)&16\\ \hline
			$F_4(z)$ & 0&1&(1)&2&3&(3)&4\\ \hline
		\end{tabular}
	\end{table*}
\end{example}

The time complexity of the merging procedure is \(\mathcal{O}(b)\), since neither list exceeds length \(b\). 
Consequently, the overall complexity of Alg. \ref{alg:merge} is \( \mathcal{O}(nb)\), matching that of DP with arrays (if $b\in \mathbb{Z}$). 
The important distinction, however, lies in the best-case behavior: 
while both the best- and worst-case complexities of DP with arrays are typically \( \mathcal{O}(nb)\), 
Alg. \ref{alg:merge} can achieve a best-case time complexity of \( \mathcal{O}(n) \). 

Furthermore, due to the properties of DP with arrays, both the weights and the capacity must be integers.
In contrast, Alg. \ref{alg:merge} imposes no such restriction, enabling it to handle a broader class of KPs with binary knapsack sets.
Moreover, if the knapsack constraint ($b\in \mathbb{Z}$) is scaled by a constant factor \( C>1\), the length of the DP array is expanded by \( C \); 
in comparison, the length of the list in Alg. \ref{alg:merge} remains unaffected.
A detailed comparison and performance will be provided in the following numerical experiments section.

\subsection{Reduction method} \label{reduction}
During the lifting process, the values of \( F_k(z) \) are derived from the previously computed \( F_{k-1}(z) \).
However, not all values of $F_{k-1}(z)$ are needed; in fact, the relevant capacities $z$ lie within a restricted range. 
The method described in this section explicitly identifies such ranges, thereby reducing the length of dominance list and DP array.
Without loss of generality, we focus on the lifting step at iteration $k+1$. 
The following two observations illustrate the relationships between the requirements between consecutive steps.

\begin{observation}\label{ob1} 
	 To calculate the coefficients $\alpha_{k+1}$, 
	\begin{itemize}
		\item for up lifting \eqref{uplift}, the required value is $F_{k}( b^k - a_{k+1})$. 
		Noting that $b^k = b - \sum_{i\in  N_1^{k}} a_i$ and $N_1^{k} = N_1^{k+1}$, we have
		$F_{k}( b^k - a_{k+1}) = F_{k}( b - \sum_{i\in  N_1^{k+1}} a_i - a_{k+1})$;
		\item for down lifting \eqref{downlift}, the required value is $F_{k}( b^k + a_{k+1})$.  
		Since $b^k = b - \sum_{i\in  N_1^{k}} a_i$ and $N_1^{k} = N_1^{k+1}\cup \{k+1\}$, we obtain
		$F_{k}( b^k + a_{k+1}) = F_{k}( b - \sum_{i\in  N_1^{k+1}} a_i)$.
	\end{itemize} 
	\end{observation}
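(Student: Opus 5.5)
The argument is a direct bookkeeping check on the parameter updates from Section~\ref{sec2}. It uses only the definition $b^k = b - \sum_{i\in N_1^k} a_i$ and the update rules for $N_1^{k+1}$. I will treat the two bullets of Observation~\ref{ob1} separately, since they correspond to the two lifting cases, which are mutually exclusive because $k+1$ lies in exactly one of $N_0^k$ or $N_1^k$.

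\emph{Up lifting.} Here $k+1\in N_0^k$, so by \eqref{uplift} the coefficient $\alpha_{k+1}$ depends on $F_k$ only through the value $F_k(b^k-a_{k+1})$. The update rule for up lifting gives $N_1^{k+1}=N_1^k$. Substituting $b^k=b-\sum_{i\in N_1^k}a_i=b-\sum_{i\in N_1^{k+1}}a_i$ into the argument then yields
\[
F_k(b^k-a_{k+1}) = F_k\Bigl(b-\sum_{i\in N_1^{k+1}}a_i-a_{k+1}\Bigr).
\]
When $b^k<a_{k+1}$ the convention $\alpha_{k+1}=0$ applies and no value of $F_k$ is needed, so this case is harmless.

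\emph{Down lifting.} Here $k+1\in N_1^k$, and \eqref{downlift} requires $F_k(b^k+a_{k+1})$. Since $N_1^{k+1}=N_1^k\setminus\{k+1\}$ and $k+1\in N_1^k$, we have the disjoint union $N_1^k=N_1^{k+1}\cup\{k+1\}$. Hence
\[
\sum_{i\in N_1^k}a_i=\sum_{i\in N_1^{k+1}}a_i+a_{k+1},
\]
and therefore $b^k+a_{k+1}=b-\sum_{i\in N_1^{k+1}}a_i$. This gives the stated identity.

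The only point requiring care is confirming that the index sets are as claimed. We need $N_1^{k}=N_1\setminus\{\ell+1,\ldots,k\}$ to agree with the recursive updates, so that $k+1\in N_1^k$ exactly in the down-lifting case and the union above is disjoint. This follows by a trivial induction on $k$ starting from $N_1^\ell=N_1$. With that in place, both bullets are immediate substitutions. I expect no real obstacle; the only subtle step is the disjoint-union identity in the down-lifting case.
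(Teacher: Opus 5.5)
Your proposal is correct and follows the paper's reasoning: substitute $b^k = b - \sum_{i\in N_1^k} a_i$, then use $N_1^{k}=N_1^{k+1}$ for up lifting and the disjoint decomposition $N_1^k = N_1^{k+1}\cup\{k+1\}$ for down lifting. Your added remarks on the infeasible case $b^k<a_{k+1}$ and on the induction confirming the index sets are harmless refinements of the same direct bookkeeping argument.
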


\begin{observation}\label{ob2}
	To compute $F_{k+1}(z)$, the DP recursion \eqref{bellman} requires both $F_{k}(z)$ and $F_{k}(z-a_{k+1})$ whenever $z\geq a_{k+1}$.
	\end{observation}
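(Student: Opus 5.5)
The observation is a direct reading of the Bellman recursion \eqref{bellman}. To make it self-contained, I would first re-derive \eqref{bellman} from the definition \eqref{fkz} of $F_{k+1}(z)$, and then read off which values of $F_k$ appear on the right-hand side.

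\textbf{Step 1: split by $x_{k+1}$.} Fix $z\in[0,b]$, which need not be an integer, and split the feasible set of $F_{k+1}(z)$ according to the value of $x_{k+1}$.
\begin{itemize}
\item Vectors with $x_{k+1}=0$ are exactly the feasible vectors $(x_1,\dots,x_k)$ of $F_k(z)$, with the same objective. Their best value is therefore $F_k(z)$.
\item Vectors with $x_{k+1}=1$ satisfy $\sum_{i=1}^k a_ix_i\le z-a_{k+1}$ and have objective $\sum_{i=1}^k\alpha_ix_i+\alpha_{k+1}$.
\end{itemize}
If $z<a_{k+1}$, the second part is empty, because $a_i>0$ forces the left-hand side to be nonnegative while the right-hand side is negative. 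Hence $F_{k+1}(z)=F_k(z)$. If $z\ge a_{k+1}$, the second part has best value $F_k(z-a_{k+1})+\alpha_{k+1}$. Taking the maximum over the two parts gives \eqref{bellman}.

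\textbf{Step 2: read off the dependencies.} The right-hand side of \eqref{bellman} involves $F_k$ only at the arguments $z$ and, when $z\ge a_{k+1}$, $z-a_{k+1}$. This is precisely the claim. I would add one consequence used later in the reduction method: if $F_{k+1}$ is needed only for $z\in[0,\bar z]$, then $F_k$ is needed only on $[0,\bar z]$, since $z-a_{k+1}\le z$.

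\textbf{Step 3: transfer to dominance lists.} By the theorem expressing $F_k$ through $L_k$, the value $F_k(z)$ is determined by the states of $L_k$ with $w\le z$. So, in list form, the observation says that only states of $L_k$ with $w\le z$ are used, together with the shifted states of $L_k\oplus(a_{k+1},\alpha_{k+1})$ with $w+a_{k+1}\le z$.

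\textbf{Main obstacle.} The only care point is Step 1 with non-integer $z$ and the boundary case $z<a_{k+1}$, where the second subproblem is infeasible. This is handled by the positivity assumption $a_i>0$. I expect no real difficulty beyond that.
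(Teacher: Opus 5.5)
Your proposal is correct and matches the paper, which gives no separate argument and reads the observation directly off the recursion \eqref{bellman}; your re-derivation of \eqref{bellman} from \eqref{fkz}, including non-integer $z$ and the infeasible case $z<a_{k+1}$, just makes that reading explicit. One small correction to your Step~2 side remark: the reduction method propagates the \emph{lower} end of the required range (if $F_{k+1}$ is needed on $[r,b]$, then $F_k$ is needed on $[(r-a_{k+1})^+,b]$, which yields $(b-\sum_{i=k+1}^n a_i)^+$), so the upper-bound consequence you state is true but is not the one the paper relies on.
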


We now extend these observations to subsequent lifting steps.

\begin{theorem}
	Consider the sequential lifting process for knapsack set $\mathcal{X}$.
	 If $F_k(z)$ is known for all $z\in [(b - \sum_{i=k+1}^n a_i)^+,\,b]$, then the coefficients $\alpha_{k+1},\ldots,\alpha_n$ can be calculated by \eqref{uplift}, \eqref{downlift} and \eqref{bellman}, where $(c)^+ = \max\{c,0\}$.
\end{theorem}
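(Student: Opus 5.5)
Define for each $j\ge k$ the window $I_j := [(b - \sum_{i=j+1}^n a_i)^+,\, b]$, so $I_n=[b,b]$ and $I_j$ grows as $j$ decreases. The proof is an induction on $j=k,k+1,\ldots,n-1$ with the following hypothesis: if $F_j(z)$ is known for all $z\in I_j$, then $\alpha_{j+1}$ can be computed and $F_{j+1}(z)$ can be recovered for all $z\in I_{j+1}$. Applying this repeatedly from the assumption at $j=k$ yields $\alpha_{k+1},\ldots,\alpha_n$. Throughout we write $s_j := \sum_{i=j+1}^n a_i$, so that $s_j = s_{j+1}+a_{j+1}$.

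\textbf{Step 1: computing $\alpha_{j+1}$.} By Observation~\ref{ob1}, $\alpha_{j+1}$ needs $F_j$ at $b-\sum_{i\in N_1^{j+1}}a_i - a_{j+1}$ (up lifting) or at $b-\sum_{i\in N_1^{j+1}}a_i$ (down lifting). Since $N_1^{j+1}\subseteq\{j+2,\ldots,n\}$ and all $a_i>0$, both arguments are at most $b$. They are also at least $b - s_j$.
\begin{itemize}
\item Up lifting: the argument is at least $b-\sum_{i=j+2}^n a_i - a_{j+1} = b - s_j$.
\item Down lifting: the argument is at least $b - s_{j+1} \ge b - s_j$.
\end{itemize}
If the argument is nonnegative, it therefore lies in $I_j$. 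If it is negative, which can only happen in the up-lifting case, then $b^j<a_{j+1}$ and the convention $\alpha_{j+1}=0$ applies without any lookup of $F_j$.

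\textbf{Step 2: propagating to $F_{j+1}$ on $I_{j+1}$.} By Observation~\ref{ob2} and \eqref{bellman}, computing $F_{j+1}(z)$ for $z\in I_{j+1}$ requires $F_j(z)$ and, when $z\ge a_{j+1}$, also $F_j(z-a_{j+1})$.
\begin{itemize}
\item For $F_j(z)$: since $I_{j+1}\subseteq I_j$ (because $s_{j+1}\le s_j$), these values are available.
\item For $F_j(z-a_{j+1})$: we have $z-a_{j+1}\ge b-s_{j+1}-a_{j+1}=b-s_j$, and $z-a_{j+1}\ge 0$ in the branch where this value is used. Also $z-a_{j+1}\le b$. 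Hence $z-a_{j+1}\in I_j$.
\end{itemize}
So $F_{j+1}$ is determined on all of $I_{j+1}$, completing the inductive step.

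\textbf{Main obstacle.} The delicate point is the positive-part truncation. When $b-s_{j+1}<0$, the window $I_{j+1}$ starts at $0$, so we must check that the shifted points $z-a_{j+1}$ never fall below $0$ in a case where they are actually needed. The split $z<a_{j+1}$ versus $z\ge a_{j+1}$ in \eqref{bellman} handles exactly this. The same concern arises for the up-lifting argument in Step 1, where the infeasibility convention covers the negative case. I would write these two boundary cases out carefully; everything else is monotonicity of partial sums of positive weights.
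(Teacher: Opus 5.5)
Your proposal is correct and follows the same route as the paper: an induction over the lifting steps, using Observation~1 with the chain $b-\sum_{i\in N_1^{j+1}}a_i \ge b-\sum_{i\in N_1^{j+1}}a_i-a_{j+1}\ge b-\sum_{i=j+1}^n a_i$ to obtain $\alpha_{j+1}$, and Observation~2 with the Bellman recursion to pass from the window $I_j$ to $I_{j+1}$. You are more careful than the paper, which does not explicitly check that $z-a_{j+1}$ stays in the window, nor treat the positive-part truncation or the infeasible up-lifting case. (Your remark that a negative lookup point arises only in up lifting tacitly uses $b^\ell\ge 0$; this is the same assumption behind the paper's claim that down lifting is always feasible.)
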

\begin{proof}
	We first consider the $k+1$ lifting step.
	\begin{itemize}
		\item [1)] From Observation~\ref{ob1} and the inequality
		\begin{equation}\nonumber
		b - \sum_{i\in  N_1^{k+1}} a_i  \geq b - \sum_{i\in  N_1^{k+1}} a_i - a_{k+1} \geq b - \sum_{i=k+1}^n a_i,
		\end{equation}
		the coefficient $\alpha_{k+1}$ can be obtained from either \eqref{uplift} for up lifting or \eqref{downlift} for down lifting.
		\item [2)] Moreover, for all $z \in [(b - \sum_{i=k+2}^n a_i)^+,\,b]$, the values $F_{k+1}(z)$ can be computed by the recursion \eqref{bellman}, as stated in Observation~\ref{ob2}.
	\end{itemize}
	 Then, by recursively applying steps 1) and 2), the coefficients $\alpha_{k+2}, \ldots, \alpha_n$ can be determined exactly. 
	\qed
\end{proof}

\begin{corollary}\label{1}
	The problem $F_k(z)$ can only consider $z\in [(b - \sum_{i=k+1}^n a_i)^+,b] $ without affecting the computation in the process of calculating the lifting coefficients $\alpha_{k+1},\ldots,\alpha_n$.
\end{corollary}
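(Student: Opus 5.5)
This follows directly from the preceding theorem, so the plan is to restate that theorem as a statement about which values of $F_k$ the remaining computation actually reads, and then argue by induction over the remaining steps. Set $\ell_j := (b - \sum_{i=j+1}^n a_i)^+$ for $j = k,\ldots,n$. These left endpoints are nondecreasing in $j$, since $\ell_{j+1} - \ell_j \ge 0$ because each $a_{j+1} > 0$. The claim to prove is the following. For every $j \ge k$, the quantities $\alpha_{j+1}$ and $F_{j+1}(z)$ for $z \in [\ell_{j+1}, b]$ are determined by the restriction of $F_j$ to $[\ell_j, b]$. It follows that discarding $F_k(z)$ for $z < \ell_k$ changes nothing downstream.

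The induction step has two parts, matching Observation~\ref{ob1} and Observation~\ref{ob2}.

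\emph{Coefficient.} By Observation~\ref{ob1}, computing $\alpha_{j+1}$ reads $F_j$ either at $b - \sum_{i\in N_1^{j+1}} a_i - a_{j+1}$ (up lifting) or at $b - \sum_{i \in N_1^{j+1}} a_i$ (down lifting). Since $N_1^{j+1} \cup \{j+1\} \subseteq \{j+1,\ldots,n\}$, both arguments are at least $b - \sum_{i=j+1}^n a_i$. When that argument is nonnegative it therefore lies in $[\ell_j, b]$. When it is negative, we are in the up-lifting case, which is infeasible, and the paper's convention sets $\alpha_{j+1} = 0$ without reading $F_j$ at all.

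\emph{Array update.} By the recursion \eqref{bellman}, $F_{j+1}(z)$ for $z \in [\ell_{j+1}, b]$ reads $F_j(z)$ and, when $z \ge a_{j+1}$, also $F_j(z - a_{j+1})$. We have $z \ge \ell_{j+1} \ge \ell_j$. Moreover $z - a_{j+1} \ge b - \sum_{i=j+2}^n a_i - a_{j+1} = b - \sum_{i=j+1}^n a_i$, and $z - a_{j+1} \ge 0$ in the case where it is used, so $z - a_{j+1} \ge \ell_j$. Thus every value read lies in $[\ell_j, b]$.

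By induction from $j = k$ to $n-1$, every value consulted in computing $\alpha_{k+1},\ldots,\alpha_n$ traces back to $F_k$ on $[\ell_k, b]$. This is exactly the corollary.

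For the dominance-list version, truncation means keeping the breakpoints of $L_k$ together with the last breakpoint $w^j \le \ell_k$. By the preceding theorem on reading $F_k(\bar z)$ from the list, this retained data still gives $F_k$ on all of $[\ell_k, b]$.

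The main obstacle is being careful about the positive part $(\cdot)^+$ at the boundary. Arguments below zero must correspond either to an infeasible up-lift or to a branch of \eqref{bellman} that is not taken, so that they never actually require $F_j$ outside the retained interval. I would handle this by the explicit case split above rather than appealing loosely to the theorem.
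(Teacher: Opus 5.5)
Your proposal is correct and follows the paper's route: the corollary is read off from the preceding theorem, whose proof is the same two-part induction (coefficient via Observation~\ref{ob1}, array update via Observation~\ref{ob2}) applied step by step from $k$ to $n-1$. Your explicit checks fill in boundary details that the paper's proof leaves implicit: that $z-a_{j+1}\ge \ell_j$ whenever the second branch of the recursion is used, and that a negative argument arises only in the infeasible up-lifting case, where $\alpha_{j+1}=0$ is set without reading $F_j$.
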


After each step, when a new dominance list or DP array is generated, 
Corollary~\ref{1} can be applied to reduce its length.
The dominance list using reduction method is shown in the following Alg. \ref{alg:merge2}.

\begin{algorithm}[H]
	\caption{Merge dominance lists with the reduction method}
	\label{alg:merge2}
	\begin{algorithmic}[0]
		\Require Dominance list $L_{k-1}=\left\{(w^1,p^1),\ldots,(w^m, p^m)\right\}$, weight $a_k$, lifting coefficient $\alpha_k$, capacity $b$ and restricted value $r=(b - \sum_{i=k+1}^n a_i)^+$.
		\Ensure Updated dominance list $L_k=\left\{(w^1,p^1),\ldots,(w^s, p^s)\right\}$.
		\State // Perform the same steps as in Alg. \ref{alg:merge}, except for the following modifications:
		\State {\footnotesize{ 5:}} \textbf{if} {$w^i > w^s$	and $w^i \ge r$} \textbf{then}
		\State {\footnotesize{13:}} \textbf{if}  {$(w^j + a_k)> w^s$ and $(w^j + a_k) \ge r$}  \textbf{then}
		\State {\footnotesize{23:}} \textbf{if} {$(w^j + a_k) > w^s$ and $(w^j + a_k) \ge r$} \textbf{then}
	\end{algorithmic}
\end{algorithm}

When applying reduction method to the DP arrays, the lifting step proceeds as follows.
Specifically, at the \(k\)-th lifting step:
\begin{itemize}
	\item[1).] Apply Corollary \ref{1} to determine the restricted value $r=(b - \sum_{i=k+1}^n a_i)^+$ ;
	\item[2).] Update the DP array $(z,F_k(z))$ only for $z\ge r$ using the DP recursion \eqref{bellman}.
\end{itemize}
This procedure ensures that only essential values are computed when using the reduction method in DP with arrays.

\subsection{Exact sequential lifting algorithm} \label{lifting}
We present an exact sequential lifting algorithm for binary knapsack set,  based on the dominance list and reduction method.
Alg. \ref{alg:exact} outlines the procedure, which begins with a seed inequality, from which the coefficients of the remaining variables are derived through sequential lifting.  
For each variable $k$ in the fixed seed set $\{1, \ldots, \ell\}$, the dominance list $L_k$ is updated using the merge procedure and reduction method described in Alg. \ref{alg:merge2}. 
The algorithm then proceeds to lift the remaining variables $k \in \{\ell+1, \ldots, n\}$ in a fixed order, applying either up or down lifting based on the variable’s fixing status.
Once the lifting sequence is completed, the resulting lifted inequality is guaranteed to be valid for the entire knapsack set $\mathcal{X}$. 

\begin{algorithm}[t]
	\caption{Exact sequential lifting algorithm}
	\renewcommand{\algorithmicrequire}{\textbf{Input:}}
	\renewcommand{\algorithmicensure}{\textbf{Output:}}
	\label{alg:exact}
	\begin{algorithmic}[1]
		\Require  Inequality $\sum_{i=1}^{n}a_i x_i \le b$ for the knapsack set $\mathcal{X}$, seed inequality $\sum_{i=1}^{\ell}\alpha_i x_i \le \beta^\ell $ with the fixed index sets $N_0, N_1$, and lifting sequence $\{\ell+1, \dots, n\}$.
		\Ensure Valid inequality \(\sum_{i=1}^n \alpha_i x_i \leq \beta^n\) for the knapsack set $\mathcal{X}$.
		\State Initialize $F_0(z)=(0,0)$ for all $z \in[0,b]$ and set $N_0^\ell = N_0,~N_1^\ell = N_1$
		\For{\(k = 1, \dots, \ell\)}
		\State Update $L_k$ using Alg. \ref{alg:merge2}
		\EndFor
		\For{\(k = \ell+1, \dots, n\)}
		\If{\(k \in N_0^{k-1}\) (up lifting)}
		\State Compute $\alpha_{k} = \beta^{k-1} - F_{k-1}(b^{k-1}- a_{k})$
		\State Update \(N_0^{k} = N_0^{k-1} \backslash \{k\}\), \(N_1^{k} = N_1^{k-1}\), \(\beta^{k} = \beta^{k-1}\) and \(b^{k} = b^{k-1}\)
		\ElsIf{\(k \in N_1^{k-1}\) (down lifting)}
		\State Compute $\alpha_{k} = F_{k-1}(b^{k-1} + a_{k}) - \beta^{k-1}$
		\State Update \(N_0^{k} = N_0^{k-1}\), \(N_1^{k} = N_1^{k-1} \backslash \{k\}\), \(\beta^{k} = \beta^{k-1} + \alpha_{k}\) and \(b^{k} = b^{k-1} + a_{k}\)
		\EndIf
		\If{\(k < n\)}
		\State Update $L_k$ using Alg. \ref{alg:merge2}
		\EndIf
		\EndFor
	\end{algorithmic}
\end{algorithm}

Next, we build upon the results discussed above to provide a detailed lifting process based on Example~\ref{ex}, 
demonstrating how Alg. \ref{alg:exact} updates the dominance list. 
Additionally, we offer a direct comparison between Alg. \ref{alg:exact} and DP with arrays during sequential lifting procedure.
The lifting coefficients are derived step by step, ensuring that the resulting lifted inequality is valid for the entire knapsack set.

\begin{example}\label{ex2}
	Consider the binary knapsack set
	\begin{equation}\nonumber
	\mathcal{X} =\left\{\boldsymbol{x} \in \{0,1\}^7\,:\, 3x_1+4x_2+5x_3+4x_4+2x_5+3x_6+6x_7\le 18\right\}.
	\end{equation}
	Let \( C = \{1, 2, 3\}, N_0 = \{4, 6\}, N_1 = \{5, 7\} \) and the lifting sequence \( \{4, 5, 6, 7\} \). 
	The seed inequality $x_1+x_2+x_3 \le 2$ is valid for the set
	\begin{equation}\nonumber
	\begin{aligned}
	\mathcal{X}(N_0,N_1) =\Big\{ \boldsymbol{x} \in \{0,1\}^n\,:\,
	&3x_1+4x_2+5x_3 	\le 18 - (2+6)=10,\\
	& x_4=0,\, x_5=1,\,x_6=0,\,x_7 =1\Big\}.
	\end{aligned}
	\end{equation}
	We now have 
	$N_0^3 =\{4,6\}, ~N_1^3=\{5,7\},~b^3= 10,~\beta^3 = 2$.
	
	First, we apply Alg. \ref{alg:exact} for the sequential lifting procedure.
	By Corollary \ref{1}, the dominance list $L_3$ can be restricted to $z \in
	\left[ (b-\sum_{i  =4}^7 a_i),b\right] = [3,18]$.
	Then the reduced dominance list $L_3$ is shown in Table \ref{table8}.
	\begin{table*}[!htb]
		\centering
		\begin{minipage}[c]{0.3\textwidth}
			\caption{Reduced dominance list of $L_3$}\label{table8}
			\centering
			\begin{tabular}{|c|c|c|c|}
				\hline
				$z$ &{3}& {7}&  {12}\\ \hline
				$F_3(z)$  &  {1} &{2} & {3}\\ \hline
			\end{tabular}
		\end{minipage}\hfill
		\begin{minipage}[c]{0.3\textwidth}
			\centering
			\caption{List of $L_3\oplus (a_4,\alpha_4)$}\label{table9}
			\begin{tabular}{|c|c|c|c|}
				\hline
				$z+a_4$ & {7}&{11}& {16}\\ \hline
				$F_3(z)+\alpha_4$   &{2} & {3} &{4}\\ \hline
			\end{tabular}
		\end{minipage}\hfill
		\begin{minipage}[c]{0.3\textwidth}
			\caption{Dominance list of $L_4$}\label{table10}
			\centering
			\begin{tabular}{|c|c|c|c|c|}
				\hline
				$z$ & 3&7&11&16\\ \hline
				$F_4(z)$ &1&2&3&4\\ \hline
			\end{tabular}
		\end{minipage}
	\end{table*}
	For variable $x_4$, the lifting coefficient is $\alpha_4 = \beta^3 - F_3(b^3-a_4) = 2-F_3(10-4) =1$,
	where \(F_3(6)=1\) since \(6\in[3,7)\) in Table~\ref{table8}. 
	After merging $L_3$ with $L_3\oplus (a_4,\alpha_4)$ (Table  \ref{table9}), the updated dominance list $L_4$ is obtained in Table \ref{table10}.
	The parameters are updated to $b^4 = 10,~\beta^4 =2$, $N_0^4=\{6\},$ and $ N_1^4=\{5,7\}$.
	Applying Corollary \ref{1}, $L_4$ can be restricted to $z =[(b-\sum_{i  =5}^7 a_i =)\,7, \,18]$, as shown in Table \ref{table11}.
	\begin{table*}[!htb]
		\centering
		\begin{minipage}[c]{0.5\textwidth}
			\caption{Reduced dominance list of $L_4$}\label{table11}
			\centering
			\begin{tabular}{|c|c|c|c|}
				\hline
				$z$ &7&11&16\\ \hline
				$F_4(z)$ &2&3&4\\ \hline
			\end{tabular}
		\end{minipage}\hfill
		\begin{minipage}[c]{0.5\textwidth}
			\caption{Dominance list of $L_5$}\label{table12}
			\centering
			\begin{tabular}{|c|c|c|c|c|}
				\hline
				$z$ & 7&9&13&18\\ \hline
				$F_5(z)$ &2&3&4&5\\ \hline
			\end{tabular}
		\end{minipage}
	\end{table*}
	\begin{table*}[!htb]
	\centering
	\begin{minipage}[c]{0.5\textwidth}
		\caption{Reduced dominance list of $L_5$}\label{table17}
		\centering
		\begin{tabular}{|c|c|c|c|}
			\hline
			$z$ & 9&13&18\\ \hline
			$F_5(z)$ &3&4&5\\ \hline
		\end{tabular}
	\end{minipage}\hfill
	\begin{minipage}[c]{0.5\textwidth}
		\caption{Reduced dominance list of $L_6$}\label{table14}
		\centering
		\begin{tabular}{|c|c|c|}
			\hline
			$z$ &13&18\\ \hline
			$F_6(z)$ &4&5\\ \hline
		\end{tabular}
	\end{minipage}
\end{table*}
	Next, for variable $x_5$, the coefficient is $\alpha_5= F_4(b^4+a_5) - \beta^4=F_4(10+2)-2= 1$.
	Merging \( L_4 \) with \( L_4 \oplus (a_5, \alpha_5) \) gives the dominance list \( L_5 \) (Table \ref{table12}), and updates parameters $b^5 = 12,~\beta^5 =3,~N_0^5=\{6\},~N_1^5=\{7\}$.
	And the reduced dominance list of $L_5$ is shown in Table \ref{table17}.
	For variable $x_6$, the coefficient is $\alpha_6=\beta^5 -F_5(b^5-a_6)=3-F_5(12-3)=0$, indicating no contribution from \( x_6 \).  
	Thus, \( L_6 = L_5 \). 
	Update $b^6 = 12,~\beta^6 =3$ and $N_0^6=\emptyset,~N_1^6=\{7\}$.
	After applying Corollary \ref{1} with \( z \in [12,18] \), the reduced dominance list of $L_6$ is given in Table \ref{table14}.  
	Finally, for variable $x_7$, $\alpha_7 = F_6(b^6+a_7) - \beta^6 = F_6(18) - 3 = 2$.
	Therefore, the final sequentially lifted inequality for \( \mathcal{X} \) is
	$x_1+x_2+x_3 + x_4+x_5 + 2x_7 \le 5.$
	
	Next, we apply the reduction method to the DP arrays during each lifting procedure (Tables~\ref{table20}--\ref{table23}).
	The reduced DP array retains only the values necessary for the subsequent lifting steps, significantly shortening its length; for instance, in Table~\ref{table23}, the length decreases from $18$ to $6$.
	Nevertheless, even with reduction, the DP arrays remain longer than the corresponding reduced dominance list.
		\begin{table*}[!htb]
				\caption{Reduced DP array of $(z,F_3(z))$}\label{table20}
			\centering
			\begin{tabular}{|c|c|c|c|c|c|c|c|c|c|c|c|c|c|c|c|c|}
				\hline
				$z$ &  3 &4& 5 & 6 &  7& {8} &9&10 & 11  & 12 & 13 & 14 & {15}& 16 & 17  &18\\ \hline
				$F_3(z)$ & 1 & 1&1 &  1 &2&2 & 2& 2& 2& 3 & 3 & 3&3 &3 & 3&3  \\ \hline
			\end{tabular}
		\end{table*}
		\begin{table*}[!htb]
		\caption{Reduced DP array of $(z,F_4(z))$}\label{table21}
		\centering
		\begin{tabular}{|c|c|c|c|c|c|c|c|c|c|c|c|c|}
			\hline
			$z$ &   7& {8} &9&10& 11  & 12 & 13  & 14 & {15}& 16 & 17  &18\\ \hline
		$F_4(z)$ &2&2 & 2& 2 &3&3& 3& 3 & 3 & 4&4&4\\ \hline
		\end{tabular}
	\end{table*}
		\begin{table*}[!htb]
		\centering
		\begin{minipage}[c]{0.5\textwidth}
			\caption{Reduced DP array of $(z,F_5(z))$}\label{table22}
		\centering
		\begin{tabular}{|c|c|c|c|c|c|}
			\hline
			$z$    &9&10& 11  & 12 & 13 \\ \hline
			$F_5(z)$ &3&3& 3& 3 & 4\\ \hline
			$z$  & 14 & {15}& 16 & 17  &18\\ \hline
			$F_5(z)$  & 4&4&4&4&5\\ \hline
		\end{tabular}
		\end{minipage}\hfill
		\begin{minipage}[c]{0.5\textwidth}
			\caption{Reduced DP array of $(z,F_6(z))$}\label{table23}
		\centering
		\begin{tabular}{|c|c|c|c|c|c|c|}
			\hline
			$z$    & 13 & 14 & {15}& 16 & 17  &18\\ \hline
			$F_6(z)$ & 4& 4&4 &4&4&5\\ \hline
		\end{tabular}
		\end{minipage}
	\end{table*}
	
\end{example}

This example provides a detailed, step-by-step illustration of the sequential lifting procedure, 
showing how Alg.~\ref{alg:exact} updates the dominance list and applies the reduction method to both dominance list and DP array. 
Building on these observations, the next section presents a comprehensive computational study to further evaluate the proposed algorithm.

%% file: tex/result.tex
\section{Numerial Results}\label{numerial}
In this section, we present a series of computational experiments to evaluate the performance of the proposed exact sequential lifting algorithm for binary knapsack set.
To do this, we first perform experiments to demonstrate the effectiveness of the proposed algorithm across binary integer programming problems with knapsack sets of different sizes and capacities, as presented in Section \ref{sec4.1}.
Then, we integrate the proposed algorithm into the exact separation framework and present computational results to examine its scalability and numerical stability when the scale of binary knapsack set increases, as discussed in Section \ref{sec4.2}.
Finally, we present computational results to compare the performance with approximate algorithm in the MIP solver to further evaluate its computational efficiency, as outlined in Section \ref{sec4.3}. 
The proposed algorithm has been implemented in C/C++ and all numerical tests were implemented on a cluster of Intel(R) Xeon(R) Gold 6140 CPU @ 2.30 GHz computers, with 180 GB RAM, running Linux (in 64 bit mode).

To ensure a fair comparison, the DP with arrays method is obtained by replacing Steps 3 and 14 in Alg. \ref{alg:exact} with the DP recursion in \eqref{bellman}, which used for computing lifting coefficients and updates arrays.
Additionally, to assess the effect of the reduction method introduced in Section \ref{reduction}, we use four different settings based on the lifting method and whether reduction is applied:
\begin{itemize}
	\item \textbf{DL}: Alg. \ref{alg:exact} without the reduction method (i.e., replace Alg. \ref{alg:merge2} with Alg. \ref{alg:merge} in Steps 3 and 14);
	\item \textbf{DP}: DP with arrays without the reduction method (i.e., replace Steps 3 and 14 in Alg. \ref{alg:exact});
	\item \textbf{DL-R}: Alg. \ref{alg:exact} (with the reduction method);
	\item \textbf{DP-R}: DP with arrays with the reduction method applied (i.e., replace Steps 3 and 14 in Alg. \ref{alg:exact} and use reduction method in Section \ref{reduction}).
\end{itemize}

\subsection{Comparison of the DL, DP and reduction method}\label{sec4.1}
The goal of experiment is to compare and evaluate the performance of the four settings on binary knapsack sets.
First, we perform numerical experiments to show the impact of the number of items and capacity.
Then we present the computational results to analyze the impact of weight and capacity.
The proposed algorithm has been linked with IBM ILOG CPLEX optimizer 20.1.0.0\footnote{https://www.ibm.com/cn-zh/products/ilog-cplex-optimization-studio/cplex-optimizer} library for solving linear programming (LP) problems.
All other ILOG CPLEX parameters are set to default values unless otherwise stated.

\subsubsection{Testset}
Our computational tests are performed on the random binary integer programming problem of the following form:
\begin{equation}\label{mip}
\max \{\boldsymbol{1}_n^\top \boldsymbol{x} : A\boldsymbol{x} \le  \boldsymbol{1}_m b,~\boldsymbol{x}\in \{0,1\}^n\},
\end{equation}
where $A = [a_{ij}]\in \mathbb{Z}_+^{m\times n}, ~b\in \mathbb{Z}_+$, \(\boldsymbol{1}_n\) and \(\boldsymbol{1}_m\) denote all-one column vectors of dimension $n$ and $m$, respectively.
The number of rows $m$ is fixed at 1000.

To capture the relationship between the capacity \(b\) and the total weight of each row, we introduce a parameter $\lambda$, the ratio of \(b\) and the total weight of each row.
Specially, each weight is defined as $a_{ij} := \min \{\bar{a}_{ij},b\}$, where \(\bar{a}_{ij}\) is uniformly distributed to an integer in the range \([0, a_{\max}]\), and \(\lambda =\frac{2b}{n a_{\max}}\).
For generality, we assume $ a_{\max}\ge 1$, which implies that
\begin{equation}\label{lam}
\lambda \le \frac{2b}{n}.
\end{equation}

The lifting process is applied independently to each constraint of the problem.
For each constraint, we adopt the seed inequality and lifting order proposed by \citet{kaparis2008local}, summarized as follows:
\begin{itemize}
	\item [1).]  Solve the LP relaxation of the problem \eqref{mip} to obtain the fractional solution \( x^* \).
	Sort variables in non-increasing order and define index sets \( N^0 = \{j \in N : x_j^* = 0\} \) and \( N^1 = \{j \in N : x_j^* = 1\} \).
	\item [2).]  Identify a minimal cover set $C \subseteq S_i := \{ j \in N : a_{ij} > 0 \}$ to construct the seed inequality.
	\item [3).] Perform  sequential lifting in the following order:
	\begin{itemize}
		\item Up-lift variables in $S_i \setminus (C \cup N^0)$;
		\item Down-lift variables in $D := C \cap N^1$, i.e., variables that belong to the cover set and are fixed at one;
		\item Up-lift the remaining variables in \( (S_i \cap N^0) \setminus C \).
	\end{itemize}
\end{itemize}

To systematically evaluate the impact of the lifting process, we compare the total lifting time across all \(m\) knapsack constraints under four different settings in the following experiments,
rather than directly measuring the solution time of the problem \eqref{mip}.
All experiments are based on the solutions obtained from the initial LP relaxations.
Note that in the following tables reporting computational results, if condition \eqref{lam} is not satisfied, the corresponding entry is expressed as ``--".

\subsubsection{Performance and impact of the number of item and capacity}
In this subsection, we demonstrate the effectiveness of the four settings by reporting lifting times (in seconds) for 1000 knapsack constraints under different combinations of the number of items ($n$ = 100, 200, 500, 1000, 2000, 3000, 4000, 5000) and knapsack capacities (b = $10^3, 10^4, 10^5, 10^6$), with fixed \(\lambda = 0.6\).
For each combination, five independent instances are generated to reduce randomness and ensure robustness in performance comparisons.

Table \ref{random13} reports clear performance differences between DL and DP.
DL consistently outperforms DP across most tested values of $n$ and $b$. 
For instance, when \(n = 1000\), the running time of DL increases only slightly from 5.21\,s at \(b = 10^4\) to 5.45\,s at \(b = 10^6\),
while DP grows sharply from 77.00\,s to 7827.01\,s, under the same conditions. 
This difference demonstrates that DL scales primarily with $n$, whereas DP is sensitive to both $n$ and $b$, making DL much more suitable for large-capacity instances.  

Reduction methods further enhance improvements, as shown in Table \ref{random15}.
DL-R consistently improves upon DL, with the improvement becoming more pronounced as $n$ and $b$ increase. 
For example, when $n=2000$ and $b=10^5$, DL requires 21.58\,s, while DL-R reduces this to 13.56\,s.
Similarly, DP-R outperforms DP, highlighting the effectiveness of reducing redundant entries prior to DP with arrays. 
This demonstrates that the reduction method is particularly effective for both methods.

\begin{table*}[thb]
	\caption{Running time for settings without reduction method under $\lambda = 0.6$ }
	\footnotesize
	\centering
	\label{random13}
	\begin{tabular}{|c|cc|cc|cc|cc|}
		\toprule
		 & \multicolumn{2}{c|}{{$b=10^3$}} & \multicolumn{2}{c|}{{$b=10^4$}} & \multicolumn{2}{c|}{{$b =10^5$}} & \multicolumn{2}{c|}{{$b=10^6$}} \\
		\cmidrule{2-9}
		n & DL & DP & DL & DP & DL & DP & DL & DP\\
		\midrule
	100 &         0.06 &      0.75 &      0.05 &      7.35 &      0.08 &     73.81 &      0.05 &    751.18 \\
	200 &          0.22 &      1.55 &      0.21 &     15.06 &      0.23 &    149.59 &      0.23 &   1524.46 \\
	500 &        1.27 &      3.70 &      1.38 &     38.48 &      1.43 &    381.23 &      1.40 &   3867.50 \\
	1000 &          4.28 &      6.96 &      5.21 &     77.00 &      5.44 &    764.96 &      5.45 &   7827.01 \\
	2000 &       20.23 &     16.77 &     20.61 &    152.90 &     21.58 &   1548.46 &     21.90 &  15577.87 \\
	3000 &      -&- &     39.89 &    210.35 &     45.44 &   2296.17 &     45.56 &  23344.49 \\
	4000 &      -&-&     71.32 &    280.49 &     81.88 &   3079.87 &     83.38 &  31278.97 \\
	5000 &        -&-&    119.82 &    360.98 &    130.84 &   3879.09 &    131.54 &  39249.12 \\
		\hline
	\end{tabular}
	\caption{Running time for settings with reduction method under $\lambda = 0.6$}
	\footnotesize
	\centering
	\label{random15}
	\begin{tabular}{|c|cc|cc|cc|cc|}
		\toprule
			 & \multicolumn{2}{c|}{{$b=10^3$}} & \multicolumn{2}{c|}{{$b=10^4$}} & \multicolumn{2}{c|}{{$b =10^5$}} & \multicolumn{2}{c|}{{$b=10^6$}} \\
		\cmidrule{2-9}
		n& DL-R & DP-R & DL-R & DP-R & DL-R & DP-R & DL-R & DP-R \\
		\midrule
		100 &        0.05 &      0.64 &      0.06 &      6.44 &      0.04 &     64.36 &      0.05 &    651.35 \\
		200 &         0.14 &      1.28 &      0.16 &     13.23 &      0.14 &    129.47 &      0.16 &   1318.83 \\
		500 &      0.83 &      3.13 &      0.76 &     31.86 &      0.87 &    332.51 &      0.84 &   3386.89 \\
		1000 &      3.63 &      6.36 &      2.82 &     65.22 &      3.56 &    681.63 &      3.65 &   7032.14 \\
		2000 &      16.97 &     14.85 &     13.63 &    136.72 &     13.56 &   1371.52 &     15.47 &  14342.41 \\
		3000 &         -&- &     32.35 &    196.39 &     29.26 &   2056.88 &     32.97 &  21602.63 \\
		4000 &       -&-&     60.55 &    262.71 &     54.32 &   2785.99 &     61.34 &  28990.69 \\
		5000 &        -&-&     100.10 &    336.73 &     88.68 &   3522.05 &     96.49 &  36360.91 \\
		\hline
	\end{tabular}
\end{table*}

We also provide visual comparisons using performance curves under the four settings.
Fig. \ref{fig:n1} plots the lifting time against \(b\) with fixed \(n = 1000\).
We can observe that DL and DL-R exhibit negligible growth as \(b\) increases,
whereas DP and DP-R grow almost linearly, with DP exceeding DL-R by more than four orders of magnitude at $b=10^6$. 
Fig. \ref{fig:n} shows running times with respect to $n$ with fixed $b = 10^5$.
DP and DP-R increase nearly linearly with $n$, while DL and DL-R grow at a much slower rate.
DL-R achieves the best overall performance; for $n=5000$, it is nearly two orders of magnitude faster than DP-R. 
These observations align with the pseudo-polynomial time complexity of DP, $\mathcal{O}(nb)$.

\begin{figure}[htbp]
	\centering
	\begin{minipage}[t]{0.45\textwidth}
		\centering
	\includegraphics[width=1.1\textwidth]{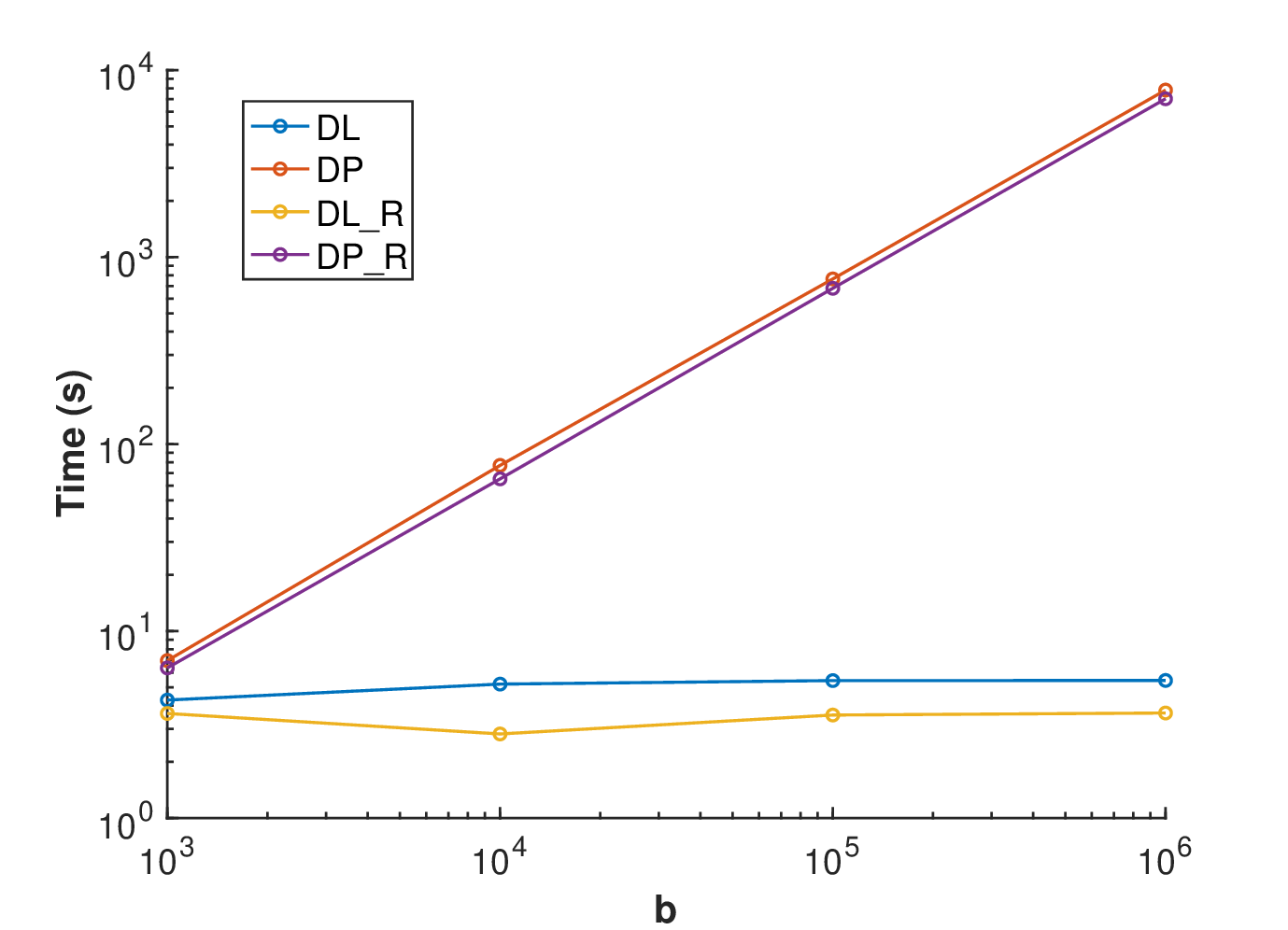}
\caption{Performance of $b$ for four settings under $n=1000,~\lambda = 0.6$}
\label{fig:n1}
	\end{minipage}
	\begin{minipage}[t]{0.45\textwidth}
	\centering
		\includegraphics[width=1.2\textwidth]{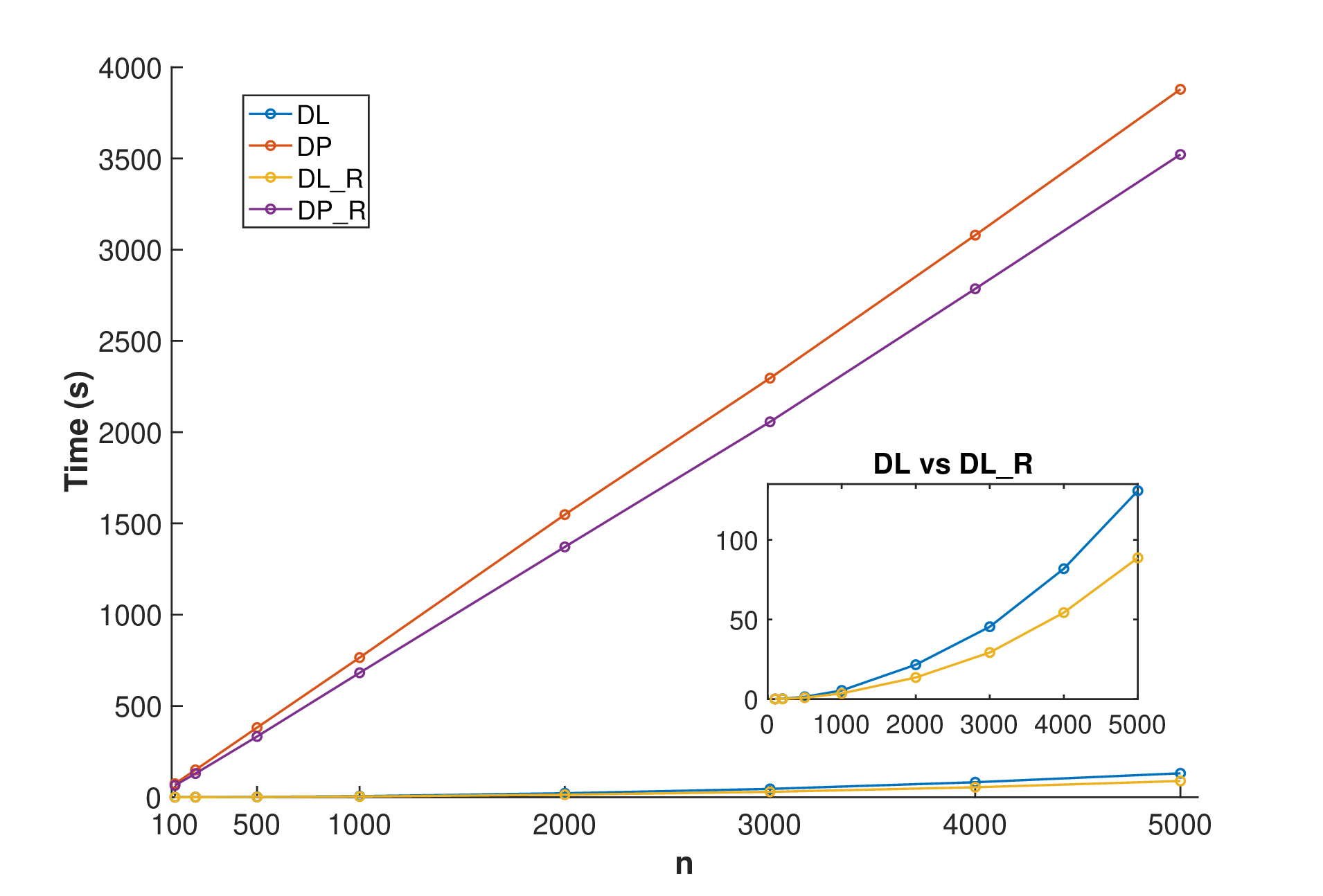}
	\caption{Performance of $n$ for four settings under $b=10^5,~\lambda = 0.6$}
	\label{fig:n}
		\end{minipage}
\end{figure}

\subsubsection{Performance and impact of $\lambda$ and capacity}\label{result1}
In this subsection, we evaluate the impact of the parameter \(\lambda\) on performance.
We report the total lifting time (in seconds) for 1000 knapsack constraints under different combinations of $\lambda$ (= 0.005, 0.01, 0.05, 0.1, 0.2, 0.4, 0.6, 0.8) and capacities ($b = 10^2, 10^3, 10^4, 10^5, 10^6$), with fixed $n = 3000$. 
Each configuration is averaged over five random instances to ensure statistical reliability.

As shown in Table \ref{random6}, DL generally outperforms DP across all values of $\lambda$ and $b$.
For example, when $\lambda=0.2$ and $b=10^6$, DL completes the lifting process in only 5.84\,s, while DP takes 7834.93\,s.
The performance gap between DL and DP widens as $\lambda$ increases, indicating that DL remains more efficient when the capacity becomes larger relative to the total item weight.

Table~\ref{random7} demonstrates that the reduction method significantly improves efficiency when $\lambda$ is large.
When $\lambda \leq 0.4$, the performance difference between DL and DL-R (or DP and DP-R) is relatively small.
However, as $\lambda$ increases, meaning that the knapsack capacity becomes larger relative to the total item weight, the reduction method yields substantial improvements.
This confirms that the reduction method is particularly effective when $\lambda$ takes larger values, as the elimination of redundant dominance relations and DP states becomes increasingly effective.

\begin{table*}[thb]
	\centering
	\caption{Running time for settings without reduction method under $n =3000$}
	\footnotesize
	\label{random6}
	\begin{tabular}{|c|cc|cc|cc|cc|cc|}
		\toprule 
			 & \multicolumn{2}{c|}{{$b=10^2$}}  & \multicolumn{2}{c|}{{$b=10^3$}} & \multicolumn{2}{c|}{{$b=10^4$}} & \multicolumn{2}{c|}{{$b =10^5$}} & \multicolumn{2}{c|}{{$b=10^6$}} \\
		\cmidrule{2-11}	
		$\lambda$ & DL & DP & DL & DP & DL  & DP & DL & DP & DL & DP\\
		\midrule	
		0.005 &      0.40 &      0.64 &      0.34 &      3.14 &      0.29 &     28.85 &      0.36 &    295.28 &      0.37 &   3016.68 \\
		0.01 &      0.54 &      0.75 &      0.33 &      2.17 &      0.37 &     19.05 &      0.33 &    194.51 &      0.33 &   2014.42 \\
		0.05 &      3.40 &      2.00 &      1.42 &      4.32 &      0.77 &     22.11 &      0.72 &    221.87 &      0.76 &   2242.81 \\
		0.1 &      3.32 &      1.99 &      4.03 &      7.61 &      1.89 &     42.71 &      1.69 &    402.27 &      1.77 &   4029.60 \\
		0.2 &        -&- &     10.39 &     12.29 &      6.40 &     83.13 &      5.68 &    778.27 &      5.84 &   7834.93 \\
		0.4 &       -&- &  29.22 &     21.72 &     21.75 &    158.53 &     21.06 &   1543.02 &     21.00 &  15642.21 \\
		0.6 &        -&- &     29.10 &     21.47 &     39.89 &    210.35 &     45.44 &   2296.17 &     45.56 &  23344.49 \\
		0.8 &        -&- &       30.14 &     22.07 &     65.28 &    269.20 &     80.66 &   3032.07 &     82.04 &  30953.53 \\
		\hline
	\end{tabular}
	\caption{Running time for settings with reduction method under $n =3000$}
	\footnotesize
	\label{random7}
	\centering
	\begin{tabular}{|c|cc|cc|cc|cc|cc|}
		\toprule 
		 & \multicolumn{2}{c|}{{$b=10^2$}}  & \multicolumn{2}{c|}{{$b=10^3$}} & \multicolumn{2}{c|}{{$b=10^4$}} & \multicolumn{2}{c|}{{$b =10^5$}} & \multicolumn{2}{c|}{{$b=10^6$}} \\
		\cmidrule{2-11}	
		$\lambda $& DL-R & DP-R & DL-R& DP-R & DL-R & DP-R & DL-R & DP-R & DL-R & DP-R  \\
		\midrule
	0.005 &      0.42 &      0.61 &      0.45 &      3.20 &      0.33 &     28.35 &      0.37 &    292.30 &      0.37 &   2979.94 \\
	0.01 &      0.57 &      0.70 &      0.40 &      2.14 &      0.35 &     18.78 &      0.40 &    192.18 &      0.37 &   2032.87 \\
	0.05 &      4.27 &      1.97 &      1.32 &      4.29 &      0.78 &     22.16 &      0.74 &    221.25 &      0.69 &   2239.75 \\
	0.1 &      3.35 &      1.98 &      4.10 &      7.40 &      1.88 &     42.52 &      1.77 &    399.93 &      1.53 &   4024.52 \\
	0.2 &      -&-&     10.07 &     11.79 &      6.52 &     81.99 &      5.94 &    776.76 &      6.20 &   7824.73 \\
	0.4 &     -&-&      26.24 &     19.86 &     21.05 &    153.85 &     21.61 &   1544.87 &     21.62 &  15621.71 \\
	0.6 &    -&-&     26.61 &     19.65 &     32.35 &    196.39 &     29.26 &   2056.88 &     32.97 &  21602.63 \\
	0.8 &    -&-&    27.59 &     20.13 &     29.38 &    212.84 &     19.47 &   2128.51 &     22.82 &  22396.36 \\
		\hline
	\end{tabular}
\end{table*}

Fig. \ref{fig:lambda1} presents the running time trends as $\lambda$ varies from 0.005 to 0.8 for fixed $b=10^4$. 
Both DL and DL-R exhibit a gradual increase in running time, in contrast, DP and DP-R show much steeper growth, with DP exhibiting almost linear dependence on $\lambda$.

\begin{figure}[htbp]
	\centering
	\begin{minipage}[t]{0.5\textwidth}
		\centering
		\includegraphics[width=1.1\textwidth]{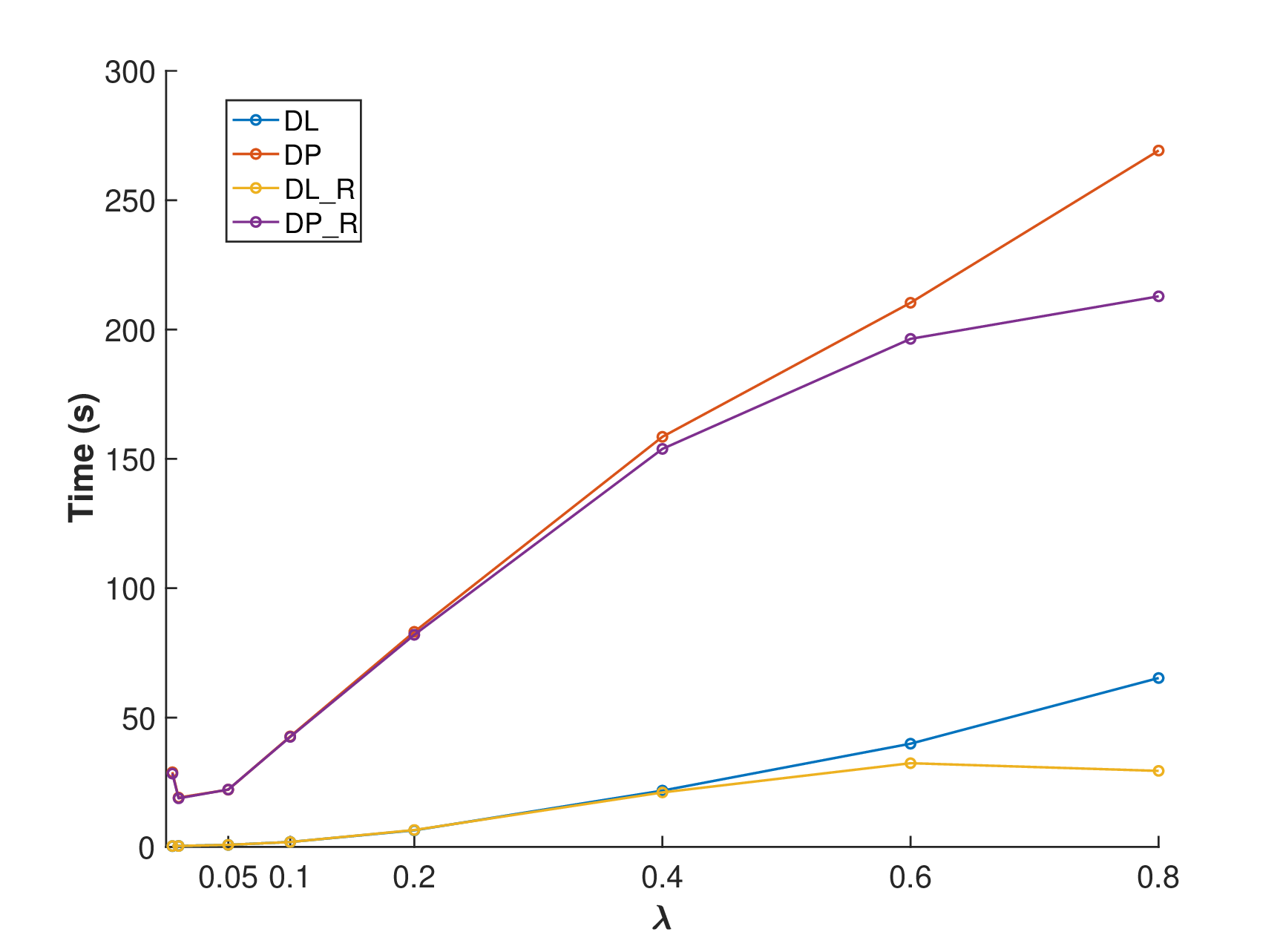}
		\caption{Performance of $\lambda$ for four settings under $n=3000,~b=10^4$}
		\label{fig:lambda1}
	\end{minipage}
	\begin{minipage}[t]{0.45\textwidth}
		\centering
	\end{minipage}
\end{figure}

\subsection{Performance of scale invariance}\label{sec4.2}
To evaluate the scale invariance of our proposed exact sequential lifting algorithm, we implemented it within the exact knapsack separation framework described in \citet{vasilyev2016implementation}.
This framework is a classical implementation of the exact separation algorithm for 0--1 programs with knapsack constraints and consists of four major phases: preprocessing, row generation, numerical correction, and sequential lifting.
The sequential lifting procedure in this framework is implemented by DP.
Therefore, by embedding our proposed algorithm into the lifting procedure of this framework, we are able to directly compare its computational behavior with the DP under identical separation conditions.
We followed the same experimental settings as in \cite{vasilyev2016implementation}, while keeping all other components of the separation framework unchanged:
\begin{itemize}
	\item [1).] The MINKNAP algorithm is used to solve KPs in the row generation procedure.
\item [2).] The lifted cover inequalities are not used.
\item [3).] The variables, which are fixed to one, are down-lifted first, and then the variables, which are fixed to zero, are up-lifted.
\end{itemize}

\subsubsection{Testset}
We adopt the same practical testsets as in \cite{vasilyev2016implementation}, including 
the Generalized Assignment Problem (GAP), 
the Multilevel Generalized Assignment Problem (MGAP),
the Capacitated p-Median Problem (CPMP), and
the Capacitated Network Location Problems (CNLP), to ensure consistency and comparability.

To examine the scale invariance, we introduce a ``Scale" factor that multiplies both the weights and capacity of each constraint by the same factor. 
This operation preserves the combinatorial structure of the problem while increasing the numerical magnitude of the coefficients, 
allowing us to assess the stability and computational robustness of both DL and DP under different scaling levels.
According to the conclusions in Subsection \ref{result1}, the reduction method provides limited effect for instances with small $\lambda$ values. 
Since all instances have $\lambda < 0.05$ in the testsets, the reduction method is not applied in this experiment.
Therefore, the experiment focuses on comparing the performance of the DL and DP.

\subsubsection{Performance}
Table~\ref{scale} summarizes the average lifting times\footnote{Shifted geometric mean, 1\,s for average time} for four practical testsets under different scale factors  (Scale = 1, 10, 100, 1000).
More detailed computational results can be found in Tables \ref{cpmp}, \ref{cnlp}, \ref{mgap}, \ref{gap} in the Appendix \ref{appendix}. 

As shown in Table~\ref{scale}, DL demonstrates excellent scale invariance.
At Scale = 1, DL performs less favorably compared to DP, as the small scale results in very short solving times, where the additional complexity of maintaining the dominance list provides little benefit.
In contrast, the DP method performs efficiently without the need for further optimization.
However, as the scale factor increases, the advantages of the DL method become more apparent, with DL showing more stable performance and significantly better scalability; whereas DP shows a significant increase, especially at Scale = 1000.
For instance, in the GAP testset, the running time of DP increases from 0.28\,s to 59.03\,s as the Scale rises from 1 to 1000, while DL increases only from 0.49\,s to 0.77\,s. 
The result confirms that DL provides more stable computational results when the magnitude of the weights and capacity increases, highlighting its robustness and scale invariance.

\begin{table*}[thb]
	\centering
	\caption{Performance of different scale under DL and DP for four practical testsets}
	\begin{tabular}{|c|cc|cc|cc|cc|}
		\toprule 
		\label{scale}
		& \multicolumn{2}{c|}{{Scale = $1$}} 
		 & \multicolumn{2}{c|}{{Scale = $10$}} & \multicolumn{2}{c|}{{Scale =$100$}} & 
		\multicolumn{2}{c|}{{Scale = $1000$}}  \\
		\cmidrule{2-9}	
		Testset & DL & DP & DL & DP & DL  & DP & DL & DP\\
		\midrule
		CPMP &  0.03 &      0.02&     0.03&      0.02&     0.03 &      0.04&     0.04 &      0.20\\
		CNLP &    0.03 &      0.02&     0.03 &      0.04&     0.04 &      0.18&     0.07 &      1.52\\
		MGAP &0.44 &      0.17&     0.44 &      0.33&     0.49 &      1.46&     0.51 &     12.60\\
		GAP &   0.49 &      0.28&     0.50 &      0.90&     0.52 &      5.53&     0.77 &     59.03\\
		\hline
	\end{tabular}
\end{table*}

\subsection{Comparison of exact lifting and approximate lifting}\label{sec4.3}
Finally, to further evaluate our proposed exact sequential lifting algorithm, we integrate it into the open-source solver HiGHS\footnote{https://github.com/ERGO-Code} by replacing its default approximate lifting algorithm for binary knapsack set.
In HiGHS, the lifting coefficients are generated using superadditive functions, which provide an efficient but approximate approach. 
In contrast, our proposed algorithm performs an exact lifting procedure, without relying on superadditive assumptions.
Specifically, we replace the source code responsible for generating lifting cuts in the binary knapsack set module with our exact algorithm (DL-R), while keeping the original seed inequalities and the default variable order of the lifting procedure to ensure fairness in comparison.

\subsubsection{Testset}
The evaluation is conducted on the MIPLIB 2017\footnote{http://miplib.zib.de} benchmark.
Each instance is tested with five different random seeds and executed with a time limit of 7200 seconds. 

\subsubsection{Performance}
Table \ref{highs} compares the computational results for benchmark MIPLIB 2017 obtained by using the proposed exact lifting and the default approximate lifting implemented in the HiGHS solver.
Detailed computational results can be found in Table \ref{miplib} in the Appendix \ref{appendix}.
For each setting, we report the number of solved instances and the average running time\footnote{Shifted geometric mean, 1\,s for average time}. 
The columns ``Faster" and ``Slower" indicate the number of instances that become at least 10\% faster or slower compared to the default setting. 
Besides, we group the benchmark MIPLIB 2017 into several brackets. 
The bracket ``All" contains the affected instances where the proposed algorithm successfully adds cuts and solved by at least one of the settings.
Each bracket $[n, 7200]$ contains instances that can be solved by the slower setting within at least $n$ seconds. 
Thus, a larger $n$ corresponds to a harder instance. 
The column ``Total'' reports the total number of instances considered in each bracket. 

As it can be observed in Tables \ref{highs}, integrating the proposed exact sequential lifting algorithm into HiGHS yields an overall positive effect. 
In particular, by using the exact separation algorithm, we can solve 4 more instances than those using the default setting on benchmark MIPLIB 2017. 
Specifically, 36 instances are solved faster than that of the default setting, while 25 instances are solved slower. 
Nevertheless, for the solved instances, the average solving time decreases from 849.79\,s to 839.79\,s. 
The improvement becomes more evident for harder instances: 
in the time bracket [1000, 7200], our proposed algorithm solves 70 instances compared to 66 for the default setting, while reducing the average time from 3554.43\,s to 3518.55\,s. 
These results demonstrate that the proposed exact lifting algorithm can effectively enhance the performance of HiGHS in handling binary knapsack cuts, providing stronger cuts without introducing additional computational burden.

\begin{table*}[thb]
	\centering
	\caption{Performance comparison of the exact lifting with the approximate lifting on the binary knapsack set for benchmark MIPLIB 2017}
	\begin{tabular}{|l|c|cc|cc|cc|}
		\toprule 
			\label{highs}
		 & & \multicolumn{2}{c|}{{Exact lifting}}  & \multicolumn{2}{c|}{{Default}}&& \\
		Bracket & Total & Solved & Time & Solved & Time & Faster & Slower\\
		\midrule
				All 	& 	125 & 122 & 839.79 & 118 &  849.79 & 36 & 25\\
		$[10,7200]$ & 121 & 118& 997.20 & 114 &1004.43&32 &25\\
		$[100,7200]$ & 105 &102& 1666.54 & 98 &  1679.87&26&23\\
	$[1000,7200]$ & 73 & 70&  3518.55& 66 &  3554.43 &17&14\\
			\hline
	\end{tabular}
\end{table*}

\section{Conclusions and future works} \label{summary}
In this paper, we propose an efficient exact sequential lifting algorithm for the binary knapsack set. 
By exploiting the dominance list structure, our method eliminates the redundant storage and computation in the DP array, 
enabling efficient handling of instances with non-integer weights and large capacities.
In addition, we introduce a reduction method that optimizes both the dominance list and the DP array, further enhancing computational efficiency.
Experimental results demonstrate that 
1). the proposed algorithm significantly outperforms the DP with arrays in most cases, especially for large problem sizes and capacities.
The reduction method proves especially effective for knapsack capacity becomes large relative to the total item weight.
2). Moreover, the proposed algorithm shows stronger scale invariance than DP, indicating enhanced robustness and reliable performance in practical settings.
3). Finally, the proposed exact algorithm achieves positive effect than the default approximate lifting method, especially for harder instances, confirming its advantage in real-world MIP solving environments.
Overall, the proposed algorithm enables exact sequential lifting for binary knapsack sets with non-integer weights and large capacities, making it suitable for integration into modern MIP solvers.

There still exist some ideas to explore for improving the effectiveness of the lifting process.
Motivated by the complementary strengths of the mentioned different four settings, we aim to develop a hybrid lifting algorithm that dynamically selects between DP, DL  and their reduction variants, adapting to the specific characteristics of each subproblem during the lifting process.
Furthermore, we seek to extend the dominance list structure and reduction methods to more general knapsack sets, such as bounded or unbounded knapsack sets, thereby enhancing the applicability of exact lifting algorithms.

\section*{Acknowledgement}
This work is supported by the National Natural Science Foundations of China (Grant Nos. 12201620, 92473208, 12021001).
The computations were done on the high performance computers of State Key Laboratory of Mathematical Sciences. 

%% file: tex/appendix.tex
\begin{appendices}

\section{Detailed computational results}\label{appendix}
See the Tables \ref{cpmp}, \ref{cnlp}, \ref{mgap}, \ref{gap} and \ref{miplib}.

\begin{center}
	
\begin{longtable}{|c|cc|cc|cc|cc|}
\caption{Performance of different scale under DL and DP for testset CPMP}
		\label{cpmp}\\
		\hline
		& \multicolumn{2}{c|}{{Scale = $1$}}  & \multicolumn{2}{c|}{{Scale = $10$}} & \multicolumn{2}{c|}{{Scale = $100$}} & \multicolumn{2}{c|}{{Scale = $1000$}}  \\
		\cmidrule{2-9}	
		Instance& DL & DP & DL & DP & DL  & DP & DL & DP\\\hline
		\endfirsthead
		\hline
	& \multicolumn{2}{c|}{{Scale = $1$}}  & \multicolumn{2}{c|}{{Scale = $10$}} & \multicolumn{2}{c|}{{Scale = $100$}} & \multicolumn{2}{c|}{{Scale = $1000$}}  \\
	\cmidrule{2-9}	
		Instance& DL & DP & DL & DP & DL  & DP & DL & DP\\\hline
		\endhead
		\hline
		\multicolumn{9}{l}{{Continued on next page}}  
		\endfoot
		 \hline
		\endlastfoot
		ccpx01a      &      0.00 &      0.00 &      0.00 &      0.00 &      0.00 &      0.01 &      0.00 &      0.05 \\
		ccpx01b      &      0.00 &      0.00 &      0.00 &      0.00 &      0.00 &      0.00 &      0.01 &      0.03 \\
		ccpx01c      &      0.01 &      0.00 &      0.00 &      0.00 &      0.00 &      0.00 &      0.02 &      0.02 \\
		ccpx01d      &      0.01 &      0.01 &      0.00 &      0.00 &      0.00 &      0.00 &      0.00 &      0.01 \\
		ccpx02a      &      0.00 &      0.00 &      0.00 &      0.00 &      0.00 &      0.00 &      0.01 &      0.01 \\
		ccpx02b      &      0.00 &      0.00 &      0.00 &      0.00 &      0.00 &      0.01 &      0.00 &      0.01 \\
		ccpx02c      &      0.00 &      0.00 &      0.00 &      0.00 &      0.00 &      0.00 &      0.01 &      0.06 \\
		ccpx02d      &      0.00 &      0.00 &      0.01 &      0.00 &      0.00 &      0.00 &      0.02 &      0.05 \\
		ccpx03a      &      0.00 &      0.00 &      0.00 &      0.00 &      0.00 &      0.00 &      0.01 &      0.02 \\
		ccpx03b      &      0.00 &      0.00 &      0.01 &      0.00 &      0.00 &      0.00 &      0.02 &      0.02 \\
		ccpx03c      &      0.00 &      0.00 &      0.00 &      0.00 &      0.01 &      0.00 &      0.01 &      0.04 \\
		ccpx03d      &      0.01 &      0.00 &      0.01 &      0.00 &      0.01 &      0.00 &      0.02 &      0.04 \\
		ccpx04a      &      0.01 &      0.00 &      0.00 &      0.00 &      0.00 &      0.00 &      0.00 &      0.02 \\
		ccpx04b      &      0.00 &      0.01 &      0.00 &      0.00 &      0.01 &      0.00 &      0.00 &      0.06 \\
		ccpx04c      &      0.01 &      0.00 &      0.03 &      0.00 &      0.01 &      0.00 &      0.01 &      0.05 \\
		ccpx04d      &      0.00 &      0.00 &      0.00 &      0.00 &      0.01 &      0.00 &      0.01 &      0.06 \\
		ccpx05a      &      0.00 &      0.00 &      0.00 &      0.00 &      0.01 &      0.00 &      0.00 &      0.03 \\
		ccpx05b      &      0.01 &      0.00 &      0.03 &      0.00 &      0.00 &      0.00 &      0.00 &      0.06 \\
		ccpx05c      &      0.00 &      0.01 &      0.00 &      0.00 &      0.00 &      0.00 &      0.00 &      0.03 \\
		ccpx05d      &      0.00 &      0.00 &      0.00 &      0.01 &      0.01 &      0.01 &      0.03 &      0.06 \\
		ccpx06a      &      0.00 &      0.00 &      0.00 &      0.00 &      0.00 &      0.01 &      0.00 &      0.00 \\
		ccpx06b      &      0.00 &      0.00 &      0.01 &      0.00 &      0.01 &      0.02 &      0.01 &      0.09 \\
		ccpx06c      &      0.01 &      0.00 &      0.01 &      0.00 &      0.01 &      0.02 &      0.02 &      0.07 \\
		ccpx06d      &      0.00 &      0.01 &      0.00 &      0.00 &      0.00 &      0.01 &      0.02 &      0.03 \\
		ccpx07a      &      0.00 &      0.00 &      0.00 &      0.00 &      0.00 &      0.01 &      0.01 &      0.09 \\
		ccpx07b      &      0.00 &      0.00 &      0.01 &      0.00 &      0.00 &      0.01 &      0.01 &      0.08 \\
		ccpx07c      &      0.01 &      0.00 &      0.01 &      0.01 &      0.01 &      0.01 &      0.01 &      0.08 \\
		ccpx07d      &      0.02 &      0.01 &      0.00 &      0.01 &      0.00 &      0.04 &      0.04 &      0.11 \\
		ccpx08a      &      0.01 &      0.00 &      0.01 &      0.00 &      0.00 &      0.02 &      0.02 &      0.21 \\
		ccpx08b      &      0.00 &      0.00 &      0.01 &      0.00 &      0.01 &      0.03 &      0.03 &      0.10 \\
		ccpx08c      &      0.02 &      0.00 &      0.00 &      0.01 &      0.00 &      0.00 &      0.00 &      0.10 \\
		ccpx08d      &      0.00 &      0.00 &      0.00 &      0.01 &      0.00 &      0.02 &      0.02 &      0.06 \\
		ccpx09a      &      0.00 &      0.00 &      0.00 &      0.00 &      0.01 &      0.02 &      0.03 &      0.07 \\
		ccpx09b      &      0.01 &      0.01 &      0.01 &      0.01 &      0.01 &      0.04 &      0.02 &      0.19 \\
		ccpx09c      &      0.01 &      0.00 &      0.00 &      0.00 &      0.02 &      0.04 &      0.02 &      0.08 \\
		ccpx09d      &      0.01 &      0.00 &      0.01 &      0.02 &      0.01 &      0.00 &      0.02 &      0.08 \\
		ccpx10a      &      0.00 &      0.00 &      0.02 &      0.02 &      0.03 &      0.07 &      0.02 &      0.39 \\
		ccpx10b      &      0.01 &      0.00 &      0.00 &      0.00 &      0.02 &      0.05 &      0.03 &      0.18 \\
		ccpx10c      &      0.01 &      0.00 &      0.02 &      0.01 &      0.02 &      0.00 &      0.02 &      0.18 \\
		ccpx10d      &      0.05 &      0.01 &      0.03 &      0.02 &      0.02 &      0.04 &      0.05 &      0.12 \\
		ccpx11a      &      0.00 &      0.00 &      0.02 &      0.00 &      0.01 &      0.02 &      0.03 &      0.28 \\
		ccpx11b      &      0.02 &      0.00 &      0.02 &      0.00 &      0.02 &      0.02 &      0.00 &      0.15 \\
		ccpx11c      &      0.01 &      0.00 &      0.01 &      0.02 &      0.01 &      0.01 &      0.02 &      0.08 \\
		ccpx11d      &      0.02 &      0.02 &      0.02 &      0.03 &      0.03 &      0.03 &      0.02 &      0.15 \\
		ccpx12a      &      0.00 &      0.00 &      0.00 &      0.00 &      0.00 &      0.03 &      0.00 &      0.09 \\
		ccpx12b      &      0.00 &      0.00 &      0.00 &      0.00 &      0.02 &      0.00 &      0.01 &      0.06 \\
		ccpx12c      &      0.00 &      0.00 &      0.02 &      0.00 &      0.02 &      0.02 &      0.01 &      0.07 \\
		ccpx12d      &      0.01 &      0.00 &      0.02 &      0.00 &      0.02 &      0.02 &      0.03 &      0.14 \\
		ccpx13a      &      0.00 &      0.00 &      0.00 &      0.00 &      0.00 &      0.00 &      0.00 &      0.05 \\
		ccpx13b      &      0.01 &      0.00 &      0.00 &      0.00 &      0.01 &      0.01 &      0.01 &      0.05 \\
		ccpx13c      &      0.02 &      0.00 &      0.01 &      0.00 &      0.03 &      0.01 &      0.02 &      0.05 \\
		ccpx13d      &      0.01 &      0.00 &      0.03 &      0.01 &      0.04 &      0.00 &      0.03 &      0.08 \\
		ccpx14a      &      0.00 &      0.00 &      0.00 &      0.01 &      0.01 &      0.00 &      0.01 &      0.20 \\
		ccpx14b      &      0.02 &      0.00 &      0.04 &      0.01 &      0.02 &      0.00 &      0.01 &      0.07 \\
		ccpx14c      &      0.01 &      0.00 &      0.02 &      0.00 &      0.01 &      0.01 &      0.02 &      0.09 \\
		ccpx14d      &      0.02 &      0.01 &      0.02 &      0.02 &      0.04 &      0.02 &      0.04 &      0.16 \\
		ccpx15a      &      0.01 &      0.00 &      0.01 &      0.01 &      0.01 &      0.07 &      0.04 &      0.43 \\
		ccpx15b      &      0.02 &      0.00 &      0.02 &      0.02 &      0.01 &      0.03 &      0.02 &      0.17 \\
		ccpx15c      &      0.02 &      0.00 &      0.01 &      0.00 &      0.00 &      0.02 &      0.00 &      0.13 \\
		ccpx15d      &      0.05 &      0.01 &      0.04 &      0.00 &      0.04 &      0.03 &      0.04 &      0.15 \\
		ccpx16a      &      0.00 &      0.00 &      0.01 &      0.01 &      0.01 &      0.00 &      0.00 &      0.22 \\
		ccpx16b      &      0.03 &      0.00 &      0.00 &      0.02 &      0.04 &      0.02 &      0.03 &      0.24 \\
		ccpx16c      &      0.04 &      0.01 &      0.02 &      0.00 &      0.03 &      0.00 &      0.02 &      0.13 \\
		ccpx16d      &      0.05 &      0.01 &      0.00 &      0.03 &      0.04 &      0.03 &      0.04 &      0.18 \\
		ccpx17a      &      0.02 &      0.00 &      0.00 &      0.00 &      0.00 &      0.00 &      0.01 &      0.22 \\
		ccpx17b      &      0.02 &      0.01 &      0.00 &      0.01 &      0.00 &      0.06 &      0.02 &      0.15 \\
		ccpx17c      &      0.02 &      0.00 &      0.01 &      0.00 &      0.00 &      0.01 &      0.01 &      0.05 \\
		ccpx17d      &      0.03 &      0.03 &      0.05 &      0.03 &      0.05 &      0.00 &      0.04 &      0.16 \\
		ccpx18a      &      0.00 &      0.00 &      0.02 &      0.01 &      0.01 &      0.04 &      0.01 &      0.25 \\
		ccpx18b      &      0.02 &      0.00 &      0.00 &      0.00 &      0.03 &      0.00 &      0.04 &      0.09 \\
		ccpx18c      &      0.02 &      0.00 &      0.02 &      0.00 &      0.03 &      0.04 &      0.05 &      0.08 \\
		ccpx18d      &      0.03 &      0.00 &      0.03 &      0.05 &      0.08 &      0.02 &      0.06 &      0.18 \\
		ccpx19a      &      0.02 &      0.01 &      0.01 &      0.03 &      0.00 &      0.01 &      0.04 &      0.47 \\
		ccpx19b      &      0.02 &      0.00 &      0.00 &      0.01 &      0.01 &      0.00 &      0.01 &      0.06 \\
		ccpx19c      &      0.01 &      0.00 &      0.02 &      0.01 &      0.01 &      0.02 &      0.01 &      0.05 \\
		ccpx19d      &      0.05 &      0.01 &      0.03 &      0.02 &      0.01 &      0.04 &      0.01 &      0.17 \\
		ccpx20a      &      0.02 &      0.00 &      0.02 &      0.00 &      0.06 &      0.12 &      0.04 &      0.70 \\
		ccpx20b      &      0.02 &      0.02 &      0.02 &      0.03 &      0.06 &      0.05 &      0.04 &      0.50 \\
		ccpx20c      &      0.03 &      0.00 &      0.02 &      0.01 &      0.02 &      0.02 &      0.02 &      0.31 \\
		ccpx20d      &      0.12 &      0.01 &      0.04 &      0.01 &      0.07 &      0.11 &      0.11 &      0.41 \\
		ccpx21a      &      0.02 &      0.02 &      0.04 &      0.03 &      0.02 &      0.03 &      0.00 &      0.36 \\
		ccpx21b      &      0.02 &      0.00 &      0.03 &      0.00 &      0.00 &      0.05 &      0.02 &      0.18 \\
		ccpx21c      &      0.09 &      0.01 &      0.03 &      0.00 &      0.04 &      0.03 &      0.03 &      0.22 \\
		ccpx21d      &      0.08 &      0.03 &      0.06 &      0.02 &      0.06 &      0.05 &      0.07 &      0.28 \\
		ccpx22a      &      0.01 &      0.00 &      0.03 &      0.02 &      0.04 &      0.09 &      0.05 &      1.09 \\
		ccpx22b      &      0.01 &      0.00 &      0.04 &      0.00 &      0.04 &      0.05 &      0.05 &      0.39 \\
		ccpx22c      &      0.01 &      0.02 &      0.07 &      0.02 &      0.04 &      0.08 &      0.04 &      0.32 \\
		ccpx22d      &      0.07 &      0.04 &      0.07 &      0.04 &      0.07 &      0.10 &      0.10 &      0.53 \\
		ccpx23a      &      0.02 &      0.00 &      0.04 &      0.01 &      0.01 &      0.02 &      0.01 &      0.47 \\
		ccpx23b      &      0.00 &      0.00 &      0.07 &      0.01 &      0.00 &      0.05 &      0.06 &      0.52 \\
		ccpx23c      &      0.05 &      0.03 &      0.07 &      0.03 &      0.05 &      0.07 &      0.05 &      0.46 \\
		ccpx23d      &      0.12 &      0.03 &      0.11 &      0.03 &      0.07 &      0.13 &      0.12 &      0.52 \\
		ccpx24a      &      0.01 &      0.01 &      0.01 &      0.02 &      0.03 &      0.09 &      0.02 &      0.79 \\
		ccpx24b      &      0.03 &      0.00 &      0.01 &      0.00 &      0.01 &      0.04 &      0.05 &      0.18 \\
		ccpx24c      &      0.03 &      0.01 &      0.01 &      0.01 &      0.00 &      0.02 &      0.04 &      0.16 \\
		ccpx24d      &      0.04 &      0.02 &      0.06 &      0.04 &      0.05 &      0.07 &      0.10 &      0.34 \\
		ccpx25a      &      0.03 &      0.00 &      0.00 &      0.01 &      0.02 &      0.03 &      0.02 &      0.21 \\
		ccpx25b      &      0.02 &      0.00 &      0.00 &      0.01 &      0.00 &      0.02 &      0.01 &      0.08 \\
		ccpx25c      &      0.02 &      0.03 &      0.01 &      0.00 &      0.01 &      0.03 &      0.02 &      0.10 \\
		ccpx25d      &      0.01 &      0.01 &      0.02 &      0.00 &      0.02 &      0.01 &      0.03 &      0.09 \\
		ccpx26a      &      0.01 &      0.01 &      0.02 &      0.00 &      0.00 &      0.05 &      0.02 &      0.50 \\
		ccpx26b      &      0.02 &      0.00 &      0.02 &      0.01 &      0.02 &      0.03 &      0.02 &      0.14 \\
		ccpx26c      &      0.00 &      0.00 &      0.00 &      0.00 &      0.02 &      0.02 &      0.02 &      0.06 \\
		ccpx26d      &      0.04 &      0.01 &      0.03 &      0.01 &      0.03 &      0.05 &      0.03 &      0.15 \\
		ccpx27a      &      0.07 &      0.01 &      0.04 &      0.07 &      0.07 &      0.15 &      0.06 &      1.86 \\
		ccpx27b      &      0.07 &      0.01 &      0.05 &      0.02 &      0.07 &      0.08 &      0.07 &      0.58 \\
		ccpx27c      &      0.03 &      0.02 &      0.06 &      0.01 &      0.05 &      0.06 &      0.03 &      0.41 \\
		ccpx27d      &      0.14 &      0.09 &      0.15 &      0.03 &      0.18 &      0.13 &      0.17 &      0.89 \\
		ccpx28a      &      0.01 &      0.01 &      0.01 &      0.00 &      0.00 &      0.04 &      0.02 &      0.22 \\
		ccpx28b      &      0.00 &      0.01 &      0.01 &      0.00 &      0.02 &      0.01 &      0.01 &      0.12 \\
		ccpx28c      &      0.01 &      0.01 &      0.03 &      0.00 &      0.01 &      0.03 &      0.00 &      0.07 \\
		ccpx28d      &      0.03 &      0.01 &      0.07 &      0.02 &      0.06 &      0.01 &      0.02 &      0.11 \\
		ccpx29a      &      0.02 &      0.00 &      0.00 &      0.00 &      0.02 &      0.00 &      0.01 &      0.10 \\
		ccpx29b      &      0.02 &      0.02 &      0.02 &      0.01 &      0.04 &      0.00 &      0.02 &      0.12 \\
		ccpx29c      &      0.03 &      0.00 &      0.01 &      0.00 &      0.04 &      0.02 &      0.03 &      0.11 \\
		ccpx29d      &      0.04 &      0.01 &      0.01 &      0.02 &      0.04 &      0.03 &      0.03 &      0.12 \\
		ccpx30a      &      0.01 &      0.00 &      0.01 &      0.01 &      0.03 &      0.02 &      0.01 &      0.22 \\
		ccpx30b      &      0.02 &      0.00 &      0.00 &      0.00 &      0.00 &      0.01 &      0.04 &      0.15 \\
		ccpx30c      &      0.01 &      0.02 &      0.02 &      0.00 &      0.02 &      0.00 &      0.05 &      0.16 \\
		ccpx30d      &      0.03 &      0.00 &      0.06 &      0.00 &      0.04 &      0.04 &      0.03 &      0.18 \\
		ccpx31a      &      0.00 &      0.00 &      0.01 &      0.00 &      0.02 &      0.07 &      0.03 &      0.47 \\
		ccpx31b      &      0.05 &      0.01 &      0.04 &      0.00 &      0.04 &      0.06 &      0.04 &      0.35 \\
		ccpx31c      &      0.04 &      0.01 &      0.02 &      0.00 &      0.02 &      0.02 &      0.02 &      0.19 \\
		ccpx31d      &      0.08 &      0.02 &      0.06 &      0.02 &      0.06 &      0.02 &      0.05 &      0.21 \\
		ccpx32a      &      0.05 &      0.02 &      0.02 &      0.05 &      0.05 &      0.24 &      0.06 &      2.21 \\
		ccpx32b      &      0.15 &      0.07 &      0.11 &      0.07 &      0.24 &      0.36 &      0.24 &      2.51 \\
		ccpx32c      &      0.28 &      0.06 &      0.16 &      0.06 &      0.16 &      0.26 &      0.14 &      1.77 \\
		ccpx32d      &      0.32 &      0.13 &      0.31 &      0.13 &      0.44 &      0.28 &      0.41 &      2.42 \\
		ccpx33a      &      0.03 &      0.00 &      0.04 &      0.01 &      0.02 &      0.07 &      0.02 &      0.56 \\
		ccpx33b      &      0.03 &      0.00 &      0.03 &      0.01 &      0.05 &      0.06 &      0.04 &      0.33 \\
		ccpx33c      &      0.04 &      0.00 &      0.07 &      0.01 &      0.08 &      0.06 &      0.10 &      0.51 \\
		ccpx33d      &      0.09 &      0.03 &      0.09 &      0.01 &      0.07 &      0.06 &      0.10 &      0.41 \\
		ccpx34a      &      0.04 &      0.00 &      0.02 &      0.00 &      0.03 &      0.12 &      0.01 &      0.81 \\
		ccpx34b      &      0.12 &      0.04 &      0.08 &      0.03 &      0.10 &      0.23 &      0.13 &      1.40 \\
		ccpx34d      &      0.30 &      0.17 &      0.33 &      0.10 &      0.32 &      0.20 &      0.37 &      2.04 \\
		ccpx35a      &      0.06 &      0.03 &      0.03 &      0.02 &      0.01 &      0.19 &      0.05 &      1.38 \\
		ccpx35c      &      0.05 &      0.01 &      0.08 &      0.01 &      0.06 &      0.07 &      0.09 &      0.28 \\
		ccpx35d      &      0.05 &      0.01 &      0.07 &      0.02 &      0.07 &      0.04 &      0.11 &      0.37 \\
		ccpx36a      &      0.02 &      0.00 &      0.03 &      0.01 &      0.00 &      0.06 &      0.03 &      0.56 \\
		ccpx36b      &      0.01 &      0.01 &      0.03 &      0.03 &      0.05 &      0.06 &      0.03 &      0.36 \\
		ccpx36c      &      0.04 &      0.01 &      0.03 &      0.02 &      0.04 &      0.05 &      0.02 &      0.32 \\
		ccpx36d      &      0.07 &      0.03 &      0.08 &      0.00 &      0.08 &      0.07 &      0.08 &      0.40 \\
		ccpx37a      &      0.01 &      0.01 &      0.01 &      0.01 &      0.00 &      0.05 &      0.00 &      0.35 \\
		ccpx37b      &      0.02 &      0.00 &      0.04 &      0.01 &      0.02 &      0.01 &      0.02 &      0.10 \\
		ccpx37c      &      0.07 &      0.01 &      0.02 &      0.01 &      0.04 &      0.04 &      0.04 &      0.19 \\
		ccpx37d      &      0.08 &      0.00 &      0.05 &      0.05 &      0.04 &      0.06 &      0.07 &      0.34 \\
		ccpx38a      &      0.01 &      0.00 &      0.04 &      0.02 &      0.01 &      0.07 &      0.06 &      0.66 \\
		ccpx38c      &      0.06 &      0.02 &      0.06 &      0.02 &      0.06 &      0.02 &      0.04 &      0.34 \\
		ccpx38d      &      0.13 &      0.03 &      0.15 &      0.05 &      0.15 &      0.11 &      0.16 &      0.83 \\
		ccpx39a      &      0.00 &      0.00 &      0.01 &      0.00 &      0.00 &      0.08 &      0.00 &      0.37 \\
		ccpx39b      &      0.04 &      0.00 &      0.00 &      0.00 &      0.04 &      0.05 &      0.03 &      0.28 \\
		ccpx39c      &      0.04 &      0.00 &      0.03 &      0.01 &      0.02 &      0.03 &      0.01 &      0.17 \\
		ccpx39d      &      0.07 &      0.00 &      0.04 &      0.01 &      0.02 &      0.04 &      0.11 &      0.22 \\
		ccpx40a      &      0.04 &      0.01 &      0.01 &      0.01 &      0.06 &      0.08 &      0.02 &      0.61 \\
		ccpx40b      &      0.06 &      0.00 &      0.06 &      0.00 &      0.01 &      0.08 &      0.03 &      0.68 \\
		ccpx40c      &      0.05 &      0.02 &      0.06 &      0.01 &      0.03 &      0.05 &      0.07 &      0.48 \\
		dd1          &      0.02 &      0.01 &      0.01 &      0.03 &      0.02 &      0.09 &      0.15 &      1.72 \\
		dd2          &      0.01 &      0.00 &      0.00 &      0.01 &      0.02 &      0.08 &      0.08 &      1.20 \\
		dd3a         &      0.04 &      0.00 &      0.01 &      0.01 &      0.05 &      0.09 &      0.21 &      1.59 \\
		dd3b         &      0.01 &      0.00 &      0.01 &      0.00 &      0.01 &      0.00 &      0.02 &      0.28 \\
		dd4a         &      0.06 &      0.01 &      0.03 &      0.02 &      0.07 &      0.25 &      0.27 &      6.42 \\
		dd4b         &      0.02 &      0.00 &      0.02 &      0.00 &      0.01 &      0.04 &      0.05 &      0.65 \\
		\hline
\end{longtable}
\end{center}

\begin{center}
\begin{longtable}{|c|cc|cc|cc|cc|}
	\caption{Performance of different scale under DL and DP for testset CNLP}
	\label{cnlp}\\	
	\hline
		& \multicolumn{2}{c|}{{Scale = $1$}}  & \multicolumn{2}{c|}{{Scale = $10$}} & \multicolumn{2}{c|}{{Scale = $100$}} & \multicolumn{2}{c|}{{Scale = $1000$}}  \\
	\cmidrule{2-9}	
	Instance& DL & DP & DL & DP & DL  & DP & DL & DP\\\hline
	\endfirsthead
	\hline
		& \multicolumn{2}{c|}{{Scale = $1$}}  & \multicolumn{2}{c|}{{Scale = $10$}} & \multicolumn{2}{c|}{{Scale = $100$}} & \multicolumn{2}{c|}{{Scale = $1000$}}  \\
	\cmidrule{2-9}	
	Instance& DL & DP & DL & DP & DL  & DP & DL & DP\\\hline
	\endhead
	\hline
	\multicolumn{9}{l}{{Continued on next page}}  
	\endfoot
	\hline
	\endlastfoot
		p18-dc       &      0.00 &      0.00 &      0.00 &      0.00 &      0.00 &      0.00 &      0.00 &      0.00 \\
		p18-d        &      0.02 &      0.02 &      0.02 &      0.00 &      0.01 &      0.11 &      0.06 &      0.77 \\
		p20-a        &      0.02 &      0.01 &      0.01 &      0.01 &      0.01 &      0.05 &      0.03 &      0.48 \\
		p20-b        &      0.02 &      0.01 &      0.02 &      0.00 &      0.01 &      0.02 &      0.01 &      0.38 \\
		p20-d        &      0.02 &      0.01 &      0.02 &      0.02 &      0.00 &      0.08 &      0.02 &      0.38 \\
		p21-a        &      0.03 &      0.00 &      0.02 &      0.04 &      0.04 &      0.06 &      0.05 &      0.38 \\
		p21-b        &      0.00 &      0.00 &      0.03 &      0.00 &      0.05 &      0.04 &      0.03 &      0.32 \\
		p26-a        &      0.02 &      0.01 &      0.02 &      0.01 &      0.02 &      0.16 &      0.04 &      0.84 \\
		p26-b        &      0.02 &      0.01 &      0.00 &      0.01 &      0.02 &      0.09 &      0.03 &      0.64 \\
		p30-a        &      0.03 &      0.01 &      0.03 &      0.02 &      0.07 &      0.19 &      0.09 &      1.10 \\
		p30-b        &      0.03 &      0.00 &      0.03 &      0.01 &      0.02 &      0.07 &      0.04 &      0.81 \\
		p31-a        &      0.00 &      0.00 &      0.01 &      0.02 &      0.00 &      0.11 &      0.06 &      0.90 \\
		p33-a        &      0.03 &      0.05 &      0.07 &      0.00 &      0.05 &      0.06 &      0.04 &      0.98 \\
		p33-b        &      0.02 &      0.02 &      0.04 &      0.02 &      0.03 &      0.08 &      0.05 &      0.80 \\
		p33-d        &      0.00 &      0.00 &      0.01 &      0.01 &      0.01 &      0.06 &      0.00 &      0.45 \\
		p34-d        &      0.02 &      0.02 &      0.00 &      0.03 &      0.01 &      0.09 &      0.05 &      1.25 \\
		p35-d        &      0.03 &      0.00 &      0.02 &      0.02 &      0.03 &      0.24 &      0.06 &      2.44 \\
		p36-a        &      0.02 &      0.04 &      0.05 &      0.03 &      0.06 &      0.28 &      0.10 &      2.62 \\
		p36-b        &      0.01 &      0.01 &      0.02 &      0.03 &      0.03 &      0.19 &      0.10 &      2.23 \\
		p36-d        &      0.01 &      0.01 &      0.00 &      0.00 &      0.00 &      0.12 &      0.02 &      0.73 \\
		p37-a        &      0.02 &      0.00 &      0.02 &      0.04 &      0.02 &      0.29 &      0.10 &      2.80 \\
		p37-b        &      0.02 &      0.02 &      0.00 &      0.02 &      0.02 &      0.25 &      0.12 &      2.61 \\
		p37-d        &      0.01 &      0.01 &      0.01 &      0.00 &      0.00 &      0.09 &      0.01 &      1.03 \\
		p38-d        &      0.00 &      0.01 &      0.02 &      0.00 &      0.00 &      0.05 &      0.01 &      0.47 \\
		p39-a        &      0.00 &      0.01 &      0.02 &      0.02 &      0.03 &      0.19 &      0.03 &      1.02 \\
		p39-b        &      0.00 &      0.00 &      0.07 &      0.02 &      0.01 &      0.06 &      0.03 &      1.13 \\
		p39-d        &      0.01 &      0.00 &      0.03 &      0.01 &      0.01 &      0.09 &      0.01 &      0.72 \\
		p40-a        &      0.03 &      0.02 &      0.03 &      0.04 &      0.05 &      0.24 &      0.13 &      1.86 \\
		p40-b        &      0.03 &      0.03 &      0.02 &      0.01 &      0.02 &      0.20 &      0.08 &      2.17 \\
		p40-d        &      0.02 &      0.01 &      0.02 &      0.02 &      0.04 &      0.11 &      0.05 &      0.85 \\
		p41-d        &      0.01 &      0.03 &      0.01 &      0.04 &      0.03 &      0.47 &      0.12 &      5.09 \\
		p42-d        &      0.02 &      0.01 &      0.06 &      0.05 &      0.01 &      0.21 &      0.04 &      2.12 \\
		p43-d        &      0.02 &      0.00 &      0.00 &      0.01 &      0.02 &      0.22 &      0.10 &      2.57 \\
		p44-a        &      0.03 &      0.06 &      0.02 &      0.07 &      0.14 &      0.35 &      0.19 &      3.58 \\
		p44-b        &      0.07 &      0.04 &      0.09 &      0.11 &      0.13 &      0.44 &      0.19 &      4.05 \\
		p44-d        &      0.03 &      0.01 &      0.01 &      0.02 &      0.03 &      0.10 &      0.04 &      0.74 \\
		p45-d        &      0.02 &      0.00 &      0.04 &      0.08 &      0.00 &      0.21 &      0.07 &      2.32 \\
		p46-a        &      0.05 &      0.02 &      0.03 &      0.07 &      0.04 &      0.48 &      0.14 &      4.11 \\
		p46-b        &      0.08 &      0.01 &      0.05 &      0.07 &      0.01 &      0.37 &      0.09 &      3.41 \\
		p47-a        &      0.05 &      0.02 &      0.08 &      0.13 &      0.07 &      0.41 &      0.15 &      3.67 \\
		p47-b        &      0.05 &      0.03 &      0.04 &      0.06 &      0.03 &      0.35 &      0.10 &      3.40 \\
		p47-d        &      0.06 &      0.00 &      0.07 &      0.05 &      0.04 &      0.28 &      0.11 &      2.74 \\
		p48-d        &      0.02 &      0.00 &      0.00 &      0.01 &      0.02 &      0.27 &      0.05 &      3.54 \\
		p49-a        &      0.10 &      0.02 &      0.02 &      0.06 &      0.08 &      0.29 &      0.10 &      3.85 \\
		p49-b        &      0.03 &      0.04 &      0.02 &      0.07 &      0.10 &      0.33 &      0.15 &      3.55 \\
		p49-d        &      0.00 &      0.01 &      0.01 &      0.06 &      0.02 &      0.17 &      0.09 &      2.34 \\
		p51-a        &      0.05 &      0.02 &      0.06 &      0.07 &      0.06 &      0.53 &      0.16 &      5.27 \\
		p52-a        &      0.01 &      0.03 &      0.08 &      0.04 &      0.03 &      0.25 &      0.12 &      3.01 \\
		p52-b        &      0.04 &      0.01 &      0.09 &      0.04 &      0.02 &      0.29 &      0.14 &      3.06 \\
		p53-a        &      0.02 &      0.04 &      0.04 &      0.05 &      0.06 &      0.40 &      0.11 &      3.78 \\
		p53-b        &      0.04 &      0.02 &      0.02 &      0.03 &      0.04 &      0.30 &      0.15 &      2.93 \\
		p54-b        &      0.02 &      0.01 &      0.03 &      0.05 &      0.05 &      0.35 &      0.05 &      3.72 \\
		p55-a        &      0.10 &      0.06 &      0.10 &      0.19 &      0.08 &      0.61 &      0.18 &      5.54 \\
		p55-d        &      0.07 &      0.00 &      0.03 &      0.05 &      0.06 &      0.31 &      0.11 &      2.83 \\
		p57-d        &      0.07 &      0.02 &      0.01 &      0.04 &      0.04 &      0.36 &      0.08 &      4.64 \\
		\hline
\end{longtable}
\end{center}

\begin{center}
\begin{longtable}{|c|cc|cc|cc|cc|}
	\caption{Performance of different scale under DL and DP for testset MGAP}
	\label{mgap}\\	
	\hline
		& \multicolumn{2}{c|}{{Scale = $1$}}  & \multicolumn{2}{c|}{{Scale = $10$}} & \multicolumn{2}{c|}{{Scale = $100$}} & \multicolumn{2}{c|}{{Scale = $1000$}}  \\
	\cmidrule{2-9}	
	Instance& DL & DP & DL & DP & DL  & DP & DL & DP\\\hline
	\endfirsthead
	\hline
		& \multicolumn{2}{c|}{{Scale = $1$}}  & \multicolumn{2}{c|}{{Scale = $10$}} & \multicolumn{2}{c|}{{Scale = $100$}} & \multicolumn{2}{c|}{{Scale = $1000$}}  \\
	\cmidrule{2-9}	
	Instance& DL & DP & DL & DP & DL  & DP & DL & DP\\\hline
	\endhead
	\hline
	\multicolumn{9}{l}{{Continued on next page}}  
	\endfoot
	\hline
	\endlastfoot	
		c10400       &      0.16 &      0.13 &      0.17 &      0.28 &      0.10 &      1.82 &      0.29 &     18.80 \\
		c15900       &      0.50 &      0.24 &      0.57 &      0.83 &      0.50 &      5.55 &      0.62 &     55.33 \\
		c20400       &      0.23 &      0.09 &      0.16 &      0.29 &      0.24 &      1.14 &      0.29 &     11.69 \\
			c201600      &      1.09 &      0.40 &      1.11 &      1.40 &      1.13 &      9.47 &      1.41 &     96.45 \\
		c30900       &      0.56 &      0.26 &      0.53 &      0.51 &      0.50 &      3.19 &      0.66 &     28.77 \\
		c401600      &      0.17 &      0.04 &      0.16 &      0.15 &      0.21 &      0.72 &      0.22 &      6.69 \\
		c60900       &      0.21 &      0.06 &      0.22 &      0.15 &      0.20 &      0.44 &      0.22 &      3.56 \\
		c801600      &      0.27 &      0.08 &      0.29 &      0.14 &      0.25 &      0.55 &      0.22 &      4.60 \\
		E1010003-5   &      0.12 &      0.03 &      0.13 &      0.10 &      0.12 &      0.50 &      0.18 &      4.38 \\
		E1010005-3   &      0.33 &      0.12 &      0.23 &      0.34 &      0.29 &      1.00 &      0.38 &      9.19 \\
		E1010005-4   &      0.27 &      0.10 &      0.23 &      0.19 &      0.28 &      0.78 &      0.26 &      7.03 \\
		e10400       &      0.20 &      0.08 &      0.18 &      0.20 &      0.11 &      1.07 &      0.20 &     10.60 \\
		E1520004-1   &      0.46 &      0.24 &      0.38 &      0.36 &      0.49 &      1.74 &      0.45 &     16.57 \\
		E1520004-2   &      0.35 &      0.12 &      0.30 &      0.23 &      0.32 &      1.31 &      0.37 &     11.57 \\
		E1520004-3   &      0.36 &      0.13 &      0.46 &      0.33 &      0.33 &      1.68 &      0.34 &     14.29 \\
		E1520004-4   &      0.34 &      0.17 &      0.36 &      0.39 &      0.37 &      1.74 &      0.35 &     15.41 \\
		E1520004-5   &      0.45 &      0.20 &      0.35 &      0.40 &      0.25 &      1.94 &      0.48 &     16.57 \\
		E1520005-2   &      0.54 &      0.25 &      0.49 &      0.47 &      0.53 &      2.54 &      0.54 &     23.51 \\
		E1520005-3   &      0.68 &      0.36 &      0.88 &      0.67 &      0.95 &      4.25 &      0.85 &     38.25 \\
		E1520005-5   &      0.41 &      0.20 &      0.60 &      0.48 &      0.59 &      2.47 &      0.53 &     21.62 \\
		e15900       &      0.82 &      0.32 &      0.83 &      1.07 &      0.79 &      6.15 &      0.91 &     58.22 \\
		E2010003-3   &      0.27 &      0.13 &      0.15 &      0.18 &      0.28 &      0.56 &      0.33 &      4.53 \\
		E2010003-5   &      0.19 &      0.05 &      0.20 &      0.17 &      0.23 &      0.52 &      0.30 &      3.68 \\
		E2010004-1   &      0.45 &      0.15 &      0.41 &      0.22 &      0.36 &      1.01 &      0.41 &      9.53 \\
		E2010004-3   &      0.41 &      0.14 &      0.49 &      0.17 &      0.32 &      0.85 &      0.50 &      6.90 \\
		E2010004-5   &      0.34 &      0.11 &      0.21 &      0.18 &      0.21 &      0.65 &      0.26 &      5.43 \\
		E2010005-2   &      0.48 &      0.21 &      0.48 &      0.25 &      0.53 &      1.08 &      0.60 &      9.65 \\
		E2010005-4   &      0.46 &      0.18 &      0.40 &      0.27 &      0.41 &      0.90 &      0.42 &      8.07 \\
		e201600      &      1.97 &      0.73 &      1.97 &      2.59 &      2.22 &     14.62 &      2.33 &    144.80 \\
		E3010003-1   &      0.26 &      0.07 &      0.23 &      0.12 &      0.30 &      0.46 &      0.35 &      4.49 \\
		E3010003-3   &      0.27 &      0.07 &      0.29 &      0.14 &      0.42 &      0.71 &      0.35 &      4.98 \\
		E3010003-5   &      0.27 &      0.09 &      0.43 &      0.23 &      0.34 &      0.85 &      0.40 &      6.30 \\
		E3010004-2   &      0.36 &      0.13 &      0.40 &      0.22 &      0.38 &      0.93 &      0.41 &      7.15 \\
		E3010004-4   &      0.48 &      0.15 &      0.44 &      0.19 &      0.45 &      0.68 &      0.56 &      6.71 \\
		E3010005-1   &      0.34 &      0.17 &      0.62 &      0.29 &      0.69 &      1.04 &      0.69 &     10.04 \\
		E3010005-3   &      0.47 &      0.13 &      0.47 &      0.21 &      0.54 &      0.92 &      0.62 &      7.47 \\
		E3010005-5   &      0.56 &      0.22 &      0.55 &      0.25 &      0.54 &      1.20 &      0.66 &      9.60 \\
		E3020004-2   &      0.69 &      0.24 &      0.61 &      0.40 &      0.70 &      1.93 &      0.79 &     16.61 \\
		E3020004-4   &      0.77 &      0.29 &      0.82 &      0.66 &      0.89 &      2.20 &      0.85 &     19.97 \\
		E3020005-2   &      0.90 &      0.30 &      1.08 &      0.53 &      1.07 &      2.81 &      1.01 &     22.81 \\
		E3020005-3   &      1.17 &      0.42 &      1.24 &      0.79 &      1.31 &      3.29 &      1.52 &     27.87 \\
		E3020005-4   &      0.80 &      0.28 &      1.03 &      0.57 &      0.83 &      2.49 &      0.96 &     20.47 \\
		E3020005-5   &      1.19 &      0.40 &      1.27 &      0.62 &      1.16 &      3.34 &      1.20 &     29.41 \\
		e30900       &      1.35 &      0.57 &      1.48 &      1.10 &      1.41 &      5.01 &      1.56 &     41.10 \\
		e60900       &      0.70 &      0.17 &      0.57 &      0.28 &      0.63 &      1.02 &      0.69 &      7.85 \\
		e801600      &      0.33 &      0.11 &      0.32 &      0.11 &      0.35 &      0.49 &      0.34 &      3.40 \\
		\hline
\end{longtable}
\end{center}

\begin{center}
	\begin{longtable}{|c|cc|cc|cc|cc|}
		\caption{Performance of different scale under DL and DP for testset GAP}
		\label{gap}\\	
		\hline
			& \multicolumn{2}{c|}{{Scale = $1$}}  & \multicolumn{2}{c|}{{Scale = $10$}} & \multicolumn{2}{c|}{{Scale = $100$}} & \multicolumn{2}{c|}{{Scale = $1000$}}  \\
		\cmidrule{2-9}	
		Instance& DL & DP & DL & DP & DL  & DP & DL & DP\\\hline
		\endfirsthead
		\hline
			& \multicolumn{2}{c|}{{Scale = $1$}}  & \multicolumn{2}{c|}{{Scale = $10$}} & \multicolumn{2}{c|}{{Scale = $100$}} & \multicolumn{2}{c|}{{Scale = $1000$}}  \\
		\cmidrule{2-9}	
		Instance& DL & DP & DL & DP & DL  & DP & DL & DP\\\hline
		\endhead
		\hline
		\multicolumn{9}{l}{{Continued on next page}}  
		\endfoot
		\hline
		\endlastfoot	
		c05100       &      0.02 &      0.01 &      0.01 &      0.00 &      0.02 &      0.25 &      0.03 &      2.22 \\
		c05200       &      0.08 &      0.03 &      0.11 &      0.18 &      0.03 &      1.51 &      0.23 &     16.20 \\
		c10100       &      0.01 &      0.00 &      0.03 &      0.00 &      0.01 &      0.12 &      0.04 &      1.07 \\
		c10200       &      0.07 &      0.04 &      0.10 &      0.16 &      0.07 &      0.81 &      0.11 &      8.31 \\
		c10400       &      0.22 &      0.18 &      0.22 &      0.73 &      0.26 &      5.23 &      0.45 &     63.18 \\
		c15900       &      0.93 &      0.64 &      0.99 &      3.19 &      0.89 &     23.45 &      1.32 &    345.01 \\
		c20100       &      0.04 &      0.00 &      0.01 &      0.02 &      0.02 &      0.05 &      0.02 &      0.43 \\
		c20200       &      0.12 &      0.07 &      0.13 &      0.11 &      0.18 &      0.60 &      0.13 &      4.89 \\
		c20400       &      0.44 &      0.14 &      0.39 &      0.74 &      0.44 &      4.46 &      0.58 &     45.20 \\
			c201600      &      3.80 &      1.93 &      3.79 &     13.84 &      3.93 &    107.61 &      4.54 &   1470.70 \\
		c30900       &      1.82 &      0.89 &      1.55 &      3.92 &      1.76 &     27.72 &      2.06 &    299.81 \\
		c40400       &      0.30 &      0.15 &      0.20 &      0.23 &      0.31 &      1.66 &      0.35 &     14.54 \\
		c401600      &      5.15 &      2.81 &      5.55 &     12.98 &      4.94 &     95.79 &      5.54 &   1068.12 \\
		c60900       &      1.80 &      1.00 &      1.77 &      2.74 &      1.88 &     15.66 &      2.10 &    157.51 \\
		c801600      &      6.52 &      3.22 &      6.65 &     10.75 &      6.52 &     72.67 &      7.02 &    710.12 \\
		d05100       &      0.08 &      0.06 &      0.08 &      0.26 &      0.10 &      2.49 &      0.42 &     33.43 \\
		d05200       &      0.50 &      0.42 &      0.48 &      1.64 &      0.55 &     15.32 &      1.44 &    212.24 \\
		d10100       &      0.17 &      0.15 &      0.16 &      0.33 &      0.18 &      2.55 &      0.50 &     26.35 \\
		d10200       &      0.39 &      0.36 &      0.54 &      1.56 &      0.64 &     11.84 &      1.08 &    160.46 \\
		d10400       &      1.02 &      0.93 &      1.18 &      4.85 &      1.33 &     41.06 &      2.28 &    572.74 \\
		d15900       &      7.97 &      5.16 &      7.94 &     33.69 &      8.11 &    276.86 &     11.39 &   3736.91 \\
		d20100       &      0.17 &      0.08 &      0.18 &      0.26 &      0.15 &      1.46 &      0.37 &     13.87 \\
		d20200       &      0.62 &      0.37 &      0.56 &      1.44 &      0.72 &      9.69 &      1.07 &    105.33 \\
		d20400       &      2.07 &      1.19 &      1.73 &      6.03 &      2.28 &     48.39 &      3.16 &    666.20 \\
		d201600      &     23.15 &     16.19 &     22.44 &    109.09 &     22.93 &    956.15 &     28.75 &  13170.76 \\
		d30900       &      7.66 &      5.14 &      7.71 &     28.88 &      8.35 &    225.16 &     10.63 &   3135.95 \\
		d40400       &      1.42 &      0.71 &      1.59 &      2.47 &      1.68 &     22.41 &      2.81 &    269.77 \\
		d401600      &      4.87 &      3.10 &      5.45 &     19.70 &      5.80 &    181.22 &     10.61 &   2783.94 \\
		d60900       &      3.17 &      1.21 &      3.17 &      5.44 &      3.22 &     45.17 &      5.11 &    618.05 \\
		d801600      &      5.47 &      2.96 &      5.23 &     13.27 &      5.76 &    114.97 &      8.65 &   1749.76 \\
		e05100       &      0.00 &      0.01 &      0.01 &      0.06 &      0.02 &      0.16 &      0.05 &      1.82 \\
		e05200       &      0.07 &      0.02 &      0.07 &      0.06 &      0.06 &      0.64 &      0.11 &      6.78 \\
		e10100       &      0.05 &      0.02 &      0.04 &      0.03 &      0.02 &      0.13 &      0.04 &      1.16 \\
		e10200       &      0.03 &      0.03 &      0.05 &      0.11 &      0.08 &      0.36 &      0.10 &      4.33 \\
		e10400       &      0.21 &      0.10 &      0.17 &      0.32 &      0.18 &      2.17 &      0.28 &     22.89 \\
		e15900       &      0.70 &      0.29 &      0.62 &      1.65 &      0.61 &     11.39 &      0.96 &    165.71 \\
		e20100       &      0.06 &      0.03 &      0.04 &      0.02 &      0.05 &      0.06 &      0.09 &      0.98 \\
		e20200       &      0.09 &      0.06 &      0.09 &      0.12 &      0.13 &      0.32 &      0.17 &      3.29 \\
		e20400       &      0.30 &      0.12 &      0.33 &      0.35 &      0.31 &      1.94 &      0.35 &     19.55 \\
		e201600      &      1.90 &      1.00 &      1.96 &      5.87 &      2.03 &     43.33 &      2.64 &    667.57 \\
		e40400       &      0.41 &      0.18 &      0.52 &      0.46 &      0.57 &      1.87 &      0.45 &     16.77 \\
		e401600      &      3.73 &      1.65 &      3.89 &      7.34 &      3.77 &     49.00 &      4.48 &    530.59 \\
		e60900       &      2.21 &      0.84 &      2.24 &      2.29 &      2.59 &     12.27 &      2.56 &    116.10 \\
		e801600      &      5.78 &      2.55 &      6.23 &      7.08 &      6.10 &     43.12 &      6.83 &    425.78 \\
		\hline
\end{longtable}
\end{center}

\begin{center}
	\begin{longtable}{|c|cc|}
		\caption{Performance comparison of the exact lifting with the approximate lifting on the binary knapsack set for benchmark MIPLIB 2017}
		\label{miplib}\\	
		\hline
	Instance  &  Exact lifting  &  Default\\\hline
		\endfirsthead
		\hline
	Instance  &  Exact lifting   &  Default\\\hline
		\endhead
		\hline
		\multicolumn{3}{l}{{Continued on next page}}  
		\endfoot
		\hline
		\endlastfoot
		30n20b8                      & 4702.75 & 7173.74 \\
		app1-2                       & 6016.68 & 5877.89 \\
		beasleyC3                    & 41.60 & 50.96 \\
		binkar10\_1                   & 108.82 & 89.44 \\
		blp-ic98                     & 7200.00 & 6817.97 \\
		bnatt400                     & 4008.11 & 3918.22 \\
		bnatt500                     & 6983.35 & 7200.00 \\
		bppc4-08                     & 3569.54 & 7200.00 \\
		brazil3                      & 7198.26 & 5389.18 \\
		cbs-cta                      & 321.39 & 318.48 \\
		CMS750\_4                     & 7200.00 & 5836.13 \\
		cod105                       & 437.67 & 471.62 \\
		csched007                    & 5974.04 & 5627.76 \\
		csched008                    & 2442.16 & 2521.41 \\
		dano3\_3                      & 438.74 & 387.96 \\
		dano3\_5                      & 1616.21 & 1630.48 \\
		decomp2                      & 102.41 & 122.84 \\
		drayage-100-23               & 5.32 & 7.81 \\
		eil33-2                      & 384.36 & 380.44 \\
		enlight\_hard                 & 63.82 & 37.93 \\
		exp-1-500-5-5                & 20.84 & 25.58 \\
		fastxgemm-n2r6s0t2           & 4972.64 & 4717.03 \\
		fiball                       & 192.56 & 173.20 \\
		gen-ip002                    & 7048.64 & 6381.23 \\
		gen-ip054                    & 3522.04 & 3391.54 \\
		glass4                       & 1884.21 & 3784.45 \\
		gmu-35-40                    & 1212.23 & 1049.56 \\
		gmu-35-50                    & 3193.28 & 3878.76 \\
		graphdraw-domain             & 6086.20 & 5486.43 \\
		h80x6320d                    & 41.74 & 41.81 \\
		icir97\_tension               & 7041.76 & 6388.15 \\
		irp                          & 3544.56 & 2358.12 \\
		istanbul-no-cutoff           & 2400.61 & 2294.86 \\
		lotsize                      & 3303.36 & 3491.45 \\
		mad                          & 5586.01 & 7200.00 \\
		map10                        & 2810.11 & 3327.55 \\
		map16715-04                  & 7114.21 & 7133.73 \\
		markshare\_4\_0                & 591.95 & 492.62 \\
		mas74                        & 5016.43 & 4530.31 \\
		mas76                        & 583.48 & 383.28 \\
		mc11                         & 117.66 & 106.49 \\
		mcsched                      & 1426.70 & 1268.31 \\
		mik-250-20-75-4              & 107.77 & 127.47 \\
		milo-v12-6-r2-40-1           & 5497.19 & 5243.09 \\
		mushroom-best                & 882.71 & 808.92 \\
		mzzv11                       & 2871.43 & 2701.18 \\
		n5-3                         & 60.07 & 57.42 \\
		neos-1122047                 & 32.06 & 32.38 \\
		neos-1171737                 & 6209.80 & 5932.94 \\
		neos-1445765                 & 123.47 & 115.45 \\
		neos-1456979                 & 651.78 & 765.21 \\
		neos17                       & 15.00 & 15.97 \\
		neos-2075418-temuka          & 1020.11 & 1007.24 \\
		neos-2746589-doon            & 4070.47 & 3945.23 \\
		neos-2978193-inde            & 40.42 & 51.14 \\
		neos-3004026-krka            & 3397.61 & 1423.82 \\
		neos-3216931-puriri          & 981.73 & 2141.07 \\
		neos-3381206-awhea           & 4.86 & 5.51 \\
		neos-3402294-bobin           & 287.00 & 368.98 \\
		neos-3627168-kasai           & 140.76 & 89.23 \\
		neos-3988577-wolgan          & 2641.20 & 2693.35 \\
		neos-4338804-snowy           & 6236.77 & 7200.00 \\
		neos-4413714-turia           & 2024.26 & 1980.85 \\
		neos-4647030-tutaki          & 4147.27 & 4703.56 \\
		neos-4722843-widden          & 711.33 & 838.50 \\
		neos-4738912-atrato          & 1312.34 & 1301.97 \\
		neos-5107597-kakapo          & 6717.46 & 7200.00 \\
		neos5                        & 132.03 & 241.35 \\
		neos-662469                  & 4905.79 & 4523.30 \\
		neos-787933                  & 6458.40 & 4621.95 \\
		neos-827175                  & 8.95 & 10.13 \\
		neos-848589                  & 3691.75 & 3692.04 \\
		neos-860300                  & 24.94 & 32.18 \\
		neos-873061                  & 1244.21 & 784.95 \\
		neos8                        & 22.57 & 24.15 \\
		neos-911970                  & 24.25 & 22.84 \\
		neos-933966                  & 385.22 & 415.12 \\
		neos-950242                  & 53.94 & 64.74 \\
		neos-960392                  & 4738.25 & 4626.11 \\
		net12                        & 2002.72 & 1847.78 \\
		nexp-150-20-8-5              & 5333.77 & 5144.23 \\
		ns1116954                    & 1849.68 & 2145.70 \\
		ns1208400                    & 670.20 & 587.24 \\
		ns1644855                    & 6054.63 & 6045.78 \\
		ns1830653                    & 1093.76 & 1379.12 \\
		p200x1188c                   & 6.84 & 7.81 \\
		peg-solitaire-a3             & 6683.16 & 7200.00 \\
		pg5\_34                       & 2049.53 & 2214.81 \\
		pg                           & 41.83 & 25.33 \\
		physiciansched6-2            & 172.96 & 179.71 \\
		piperout-08                  & 182.73 & 172.35 \\
		pk1                          & 1375.60 & 1293.82 \\
		qap10                        & 582.28 & 569.55 \\
		radiationm18-12-05           & 6245.72 & 6697.14 \\
		rail507                      & 2472.43 & 2405.87 \\
		ran14x18-disj-8              & 2962.21 & 3007.67 \\
		rmatr100-p10                 & 396.01 & 414.18 \\
		rocI-4-11                    & 5317.13 & 6486.73 \\
		rocII-5-11                   & 7080.53 & 7200.00 \\
		roi2alpha3n4                 & 6847.99 & 6537.33 \\
		roll3000                     & 108.28 & 122.36 \\
		satellites2-40               & 3862.77 & 4436.94 \\
		satellites2-60-fs            & 3133.13 & 2881.86 \\
		sct2                         & 4499.37 & 4366.58 \\
		seymour1                     & 583.46 & 599.23 \\
		snp-02-004-104               & 611.82 & 632.54 \\
		sp150x300d                   & 0.65 & 0.74 \\
		supportcase18                & 4654.59 & 6431.25 \\
		supportcase26                & 4880.09 & 4091.99 \\
		supportcase33                & 445.64 & 499.20 \\
		supportcase40                & 4191.38 & 4073.73 \\
		supportcase7                 & 305.43 & 298.83 \\
		swath1                       & 57.88 & 53.41 \\
		swath3                       & 489.38 & 588.41 \\
		tbfp-network                 & 3112.30 & 1814.67 \\
		timtab1                      & 191.74 & 201.37 \\
		tr12-30                      & 898.47 & 1009.72 \\
		traininstance6               & 1137.94 & 1208.18 \\
		trento1                      & 7200.00 & 7167.33 \\
		triptim1                     & 2969.15 & 3070.13 \\
		uccase12                     & 51.65 & 47.48 \\
		uccase9                      & 6045.03 & 5681.74 \\
		uct-subprob                  & 4690.73 & 5492.88 \\
		unitcal\_7                    & 416.97 & 340.39 \\
		wachplan                     & 1066.23 & 1612.68 \\
	\end{longtable}
\end{center}
\end{appendices}

%% file: tex/sn-bibliography.bib
@article{kaparis2008local,
	title={Local and global lifted cover inequalities for the 0-1 multidimensional knapsack problem},
	author={Kaparis, Konstantinos and Letchford, Adam N},
	journal={Eur. J. Oper. Res.},
	volume={186},
	number={1},
	pages={91--103},
	year={2008},
	note={\url{https://doi.org/10.1016/j.ejor.2007.01.032}},
	publisher={Elsevier}
}

@article{vasilyev2016implementation,
	title={An implementation of exact knapsack separation},
	author={Vasilyev, Igor and Boccia, Maurizio and Hanafi, Sa{\"\i}d},
	journal={J. Glob. Optim.},
	volume={66},
	number={1},
	pages={127--150},
	year={2016},
	note={\url{https://doi.org/10.1007/s10898-015-0294-3}},
	publisher={Springer}
}

@article{ceselli2009computational,
	title={A computational evaluation of a general branch-and-price framework for capacitated network location problems},
	author={Ceselli, Alberto and Liberatore, Federico and Righini, Giovanni},
	journal={Ann. Oper. Res.},
	volume={167},
	pages={209--251},
	year={2009},
	note={\url{https://doi.org/10.1007/s10479-008-0375-5}}
}

@Inbook{achterberg2013mixed,
	title={Mixed integer programming: Analyzing 12 years of progress},
	author={Achterberg, Tobias and Wunderling, Roland},
	booktitle={Facets of combinatorial optimization: Festschrift for martin gr{\"o}tschel},
	pages={449--481},
	year={2013},
	publisher={Springer},
	address={Berlin, Heidelberg},
	note={\url{https://doi.org/10.1007/978-3-642-38189-8_18}}
}

@article{gomory1969some,
	title={Some polyhedra related to combinatorial problems},
	author={Gomory, Ralph E},
	journal={Linear Algebra Appl. },
	volume={2},
	number={4},
	pages={451--558},
	year={1969},
	note={\url{https://doi.org/10.1016/0024-3795(69)90017-2}},
	publisher={Elsevier}
}

@article{nemhauser1974properties,
	title={Properties of vertex packing and independence system polyhedra},
	author={Nemhauser, George L and Trotter Jr, Leslie E},
	journal={Math.  Program.},
	volume={6},
	number={1},
	pages={48--61},
	year={1974},
	note={\url{https://doi.org/10.1007/BF01580222}},
	publisher={Springer}
}

@article{padberg1973facial,
	title={On the facial structure of set packing polyhedra},
	author={Padberg, Manfred W},
	journal={Math.  Program.},
	volume={5},
	number={1},
	pages={199--215},
	year={1973},
	note={\url{https://doi.org/10.1007/BF01580121}},
	publisher={Springer}
}

@article{savelsbergh1997branch,
	title={A branch-and-price algorithm for the generalized assignment problem},
	author={Savelsbergh, Martin},
	journal={Oper. Res.},
	volume={45},
	number={6},
	pages={831--841},
	year={1997},
	note={\url{https://doi.org/10.1287/opre.45.6.831}},
	publisher={INFORMS}
}

@article{han2013exact,
	title={Exact algorithms for a bandwidth packing problem with queueing delay guarantees},
	author={Han, Jinil and Lee, Kyungsik and Lee, Chungmok and Park, Sungsoo},
	journal={INFORMS J. Comput.},
	volume={25},
	number={3},
	pages={585--596},
	year={2013},
	note={\url{https://doi.org/10.1287/ijoc.1120.0523}},
	publisher={INFORMS}
}

@article{potts1988algorithms,
	title={Algorithms for scheduling a single machine to minimize the weighted number of late jobs},
	author={Potts, Chris N and Van Wassenhove, LN},
	journal={Manag. Sci.},
	volume={34},
	number={7},
	pages={843--858},
	year={1988},
	note={\url{https://doi.org/10.1287/mnsc.34.7.843}},
	publisher={INFORMS}
}

@article{wolsey1975faces,
	title={Faces for a linear inequality in 0--1 variables},
	author={Wolsey, Laurence A},
	journal={Math.  Program.},
	volume={8},
	number={1},
	pages={165--178},
	year={1975},
		note={\url{https://doi.org/10.1007/BF01580441}},
	publisher={Springer}
}

@article{hammer1975facet,
	title={Facet of regular 0--1 polytopes},
	author={Hammer, Peter L and Johnson, Ellis L and Peled, Uri N},
	journal={Math.  Program.},
	volume={8},
	pages={179--206},
	year={1975},
	note={\url{https://doi.org/10.1007/BF01580442}},
	publisher={Springer}
}

@article{de2002facets,
	title={Facets of the complementarity knapsack polytope},
	author={De Farias Jr, Ismael R and Johnson, Ellis L and Nemhauser, George L},
	journal={Math. Oper. Res.},
	volume={27},
	number={1},
	pages={210--226},
	year={2002},
	note={\url{https://doi.org/10.1287/moor.27.1.210.335}},
	publisher={INFORMS}
}

@article{lin2008box,
	title={Box-constrained quadratic programs with fixed charge variables},
	author={Lin, Tin-Chi and Vandenbussche, Dieter},
	journal={J. Glob. Optim.},
	volume={41},
	pages={75--102},
	year={2008},
	note={\url{https://doi.org/10.1007/s10898-007-9167-8}},
	publisher={Springer}
}

@article{vandenbussche2005polyhedral,
	title={A polyhedral study of nonconvex quadratic programs with box constraints},
	author={Vandenbussche, Dieter and Nemhauser, George L},
	journal={Math.  Program.},
	volume={102},
	pages={531--557},
	year={2005},
	note={\url{https://doi.org/10.1007/s10107-004-0549-0}},
	publisher={Springer}
}

@article{ceria1998cutting,
	title={Cutting planes for integer programs with general integer variables},
	author={Ceria, Sebastian and Cordier, C{\'e}cile and Marchand, Hugues and Wolsey, Laurence A},
	journal={Math.  Program.},
	volume={81},
	pages={201--214},
	year={1998},
	note={\url{https://doi.org/10.1007/BF01581105}},
	publisher={Springer}
}

@article{agra2007lifting,
	title={Lifting two-integer knapsack inequalities},
	author={Agra, Agostinho and Constantino, Miguel Fragoso},
	journal={Math.  Program.},
	volume={109},
	number={1},
	pages={115--154},
	year={2007},
	note={\url{https://doi.org/10.1007/s10107-006-0705-9}}
}

@article{wolsey1976facets,
	title={Facets and strong valid inequalities for integer programs},
	author={Wolsey, Laurence A},
	journal={Oper. Res.},
	volume={24},
	number={2},
	pages={367--372},
	year={1976},
		note={\url{https://doi.org/10.1287/opre.24.2.367}},
	publisher={INFORMS}
}

@article{richard2009valid,
	title={Valid inequalities for {MIP}s and group polyhedra from approximate liftings},
	author={Richard, Jean-Philippe P and Li, Yanjun and Miller, Lisa A},
	journal={Math.  Program.},
	volume={118},
	pages={253--277},
	year={2009},
	note={\url{https://doi.org/10.1007/s10107-007-0190-9}},
	publisher={Springer}
}

@article{basu2019nonunique,
	title={Nonunique lifting of integer variables in minimal inequalities},
	author={Basu, Amitabh and Dey, Santanu S and Paat, Joseph},
	journal={SIAM J. Discrete Math.},
	volume={33},
	number={2},
	pages={755--783},
	year={2019},
	note={\url{https://doi.org/10.1137/17M1117070}},
	publisher={SIAM}
}

@article{basu2012unique,
	title={Unique minimal liftings for simplicial polytopes},
	author={Basu, Amitabh and Cornu{\'e}jols, G{\'e}rard and K{\"o}ppe, Matthias},
	journal={Math. Oper. Res.},
	volume={37},
	number={2},
	pages={346--355},
	year={2012},
	note={\url{https://doi.org/10.1287/moor.1110.0536}},
	publisher={INFORMS}
}

@article{dey2010constrained,
	title={Constrained infinite group relaxations of {MIP}s},
	author={Dey, Santanu S and Wolsey, Laurence A},
	journal={SIAM J. Optim.},
	volume={20},
	number={6},
	pages={2890--2912},
	year={2010},
	note={\url{https://doi.org/10.1137/090754388}},
	publisher={SIAM}
}

@article{hartvigsen1992complexity,
	title={The complexity of lifted inequalities for the knapsack problem},
	author={Hartvigsen, David and Zemel, Eitan},
	journal={Discrete Appl. Math.},
	volume={39},
	number={2},
	pages={113--123},
	year={1992},
	note={\url{https://doi.org/10.1016/0166-218X(92)90162-4}},
	publisher={Elsevier}
}

@article{padberg1975note,
	title={A note on zero-one programming},
	author={Padberg, Manfred W},
	journal={Oper.  Res.},
	volume={23},
	number={4},
	pages={833--837},
	year={1975},
	note={\url{https://doi.org/10.1287/opre.23.4.833}},
	publisher={INFORMS}
}

@article{wolsey1977valid,
	title={Valid inequalities and superadditivity for 0-1 integer programs},
	author={Wolsey, Laurence A},
	journal={Math. Oper. Res.},
	volume={2},
	number={1},
	pages={66--77},
	year={1977},
	note={\url{https://doi.org/10.1287/moor.2.1.66}},
	publisher={INFORMS}
}

@article{crowder1983solving,
	title={Solving large-scale zero-one linear programming problems},
	author={Crowder, Harlan and Johnson, Ellis L and Padberg, Manfred},
	journal={Oper.  Res.},
	volume={31},
	number={5},
	pages={803--834},
	year={1983},
	note={\url{https://doi.org/10.1287/opre.31.5.803}},
	publisher={INFORMS}
}

@article{letchford2019lifted,
	title={On lifted cover inequalities: A new lifting procedure with unusual properties},
	author={Letchford, Adam N and Souli, Georgia},
	journal={Oper. Res. Lett.},
	volume={47},
	number={2},
	pages={83--87},
	year={2019},
	note={\url{https://doi.org/10.1016/j.orl.2018.12.005}},
	publisher={Elsevier}
}

@article{balas1975facets,
	title={Facets of the knapsack polytope},
	author={Balas, Egon},
	journal={Math.  Program.},
	volume={8},
	pages={146--164},
	year={1975},
	note={\url{https://doi.org/10.1007/BF01580440}}
}

@article{balas1978facets,
	title={Facets of the knapsack polytope from minimal covers},
	author={Balas, Egon and Zemel, Eitan},
	journal={SIAM J. Appl. Math.},
	volume={34},
	number={1},
	pages={119--148},
	year={1978},
	note={\url{https://doi.org/10.1137/0134010}},
	publisher={SIAM}
}

@article{gu2000sequence,
	title={Sequence independent lifting in mixed integer programming},
	author={Gu, Zonghao and Nemhauser, George L and Savelsbergh, Martin WP},
	journal={J. Comb. Optim.},
	volume={4},
	pages={109--129},
	year={2000},
	note={\url{https://doi.org/10.1023/A:1009841107478}},
	publisher={Springer}
}

@article{kaparis2010separation,
	title={Separation algorithms for 0-1 knapsack polytopes},
	author={Kaparis, Konstantinos and Letchford, Adam N},
	journal={Math.  Program.},
	volume={124},
	pages={69--91},
	year={2010},
	note={\url{https://doi.org/10.1007/s10107-010-0359-5}},
	publisher={Springer}
}

@article{zemel1989easily,
	title={Easily computable facets of the knapsack polytope},
	author={Zemel, Eitan},
	journal={Math. Oper. Res.},
	volume={14},
	number={4},
	pages={760--764},
	year={1989},
	note={\url{https://doi.org/10.1287/moor.14.4.760}},
	publisher={INFORMS}
}

@article{zeng2011polyhedral,
	title={A polyhedral study on 0-1 knapsack problems with disjoint cardinality constraints: Facet-defining inequalities by sequential lifting},
	author={Zeng, Bo and Richard, Jean-Philippe P},
	journal={Discrete Optim.},
	volume={8},
	number={2},
	pages={277--301},
	year={2011},
	note={\url{https://doi.org/10.1016/j.disopt.2010.09.005}},
	publisher={Elsevier}
}

@article{zeng2011polyhedral2,
	title={A polyhedral study on 0-1 knapsack problems with disjoint cardinality constraints: Strong valid inequalities by sequence-independent lifting},
	author={Zeng, Bo and Richard, Jean-Philippe P},
	journal={Discrete Optim.},
	volume={8},
	number={2},
	pages={259--276},
	year={2011},
	note={\url{https://doi.org/10.1016/j.disopt.2010.09.004}},
	publisher={Elsevier}
}

@article{van2002polyhedral,
	title={Polyhedral results for the edge capacity polytope},
	author={van Hoesel, Stan PM and Koster, Arie MCA and van de Leensel, Robert LMJ and Savelsbergh, Martin WP},
	journal={Math.  Program.},
	volume={92},
	pages={335--358},
	year={2002},
	note={\url{https://doi.org/10.1007/s101070200292}},
	publisher={Springer}
}

@article{zemel1978lifting,
	title={Lifting the facets of zero-one polytopes},
	author={Zemel, Eitan},
	journal={Math.  Program.},
	volume={15},
	pages={268--277},
	year={1978},
	note={\url{https://doi.org/10.1007/BF01609032}},
	publisher={Springer}
}

@article{easton2008simultaneously,
	title={Simultaneously lifting sets of binary variables into cover inequalities for knapsack polytopes},
	author={Easton, Todd and Hooker, Kevin},
	journal={Discrete Optim.},
	volume={5},
	number={2},
	pages={254--261},
	year={2008},
	note={\url{https://doi.org/10.1016/j.disopt.2007.05.003}},
	publisher={Elsevier}
}

@article{weismantel19970,
	title={On the 0/1 knapsack polytope},
	author={Weismantel, Robert},
	journal={Math.  Program.},
	volume={77},
	pages={49--68},
	year={1997},
	note={\url{https://doi.org/10.1007/BF02614517}},
	publisher={Springer}
}

@article{ross1975branch,
	title={A branch and bound algorithm for the generalized assignment problem},
	author={Ross, G Terry and Soland, Richard M},
	journal={Math.  Program.},
	volume={8},
	number={1},
	pages={91--103},
	year={1975},
	note={\url{https://doi.org/10.1007/BF01580430}},
	publisher={Springer}
}

@article{chen2021complexity,
	title={On the complexity of sequentially lifting cover inequalities for the knapsack polytope},
	author={Chen, Wei-Kun and Dai, Yu-Hong},
	journal={Sci. China Math. },
	volume={64},
	pages={211--220},
	year={2021},
	note={\url{https://doi.org/10.1007/s11425-019-9538-1}}
}

@book{wolsey1999integer,
	title={Integer and combinatorial optimization},
	author={Wolsey, Laurence A and Nemhauser, George L},
	year={1999},
	address		= "New {Y}ork",
	publisher={John Wiley \& Sons}
}

@article{ferreira1996solving,
	title={Solving multiple knapsack problems by cutting planes},
	author={Ferreira, Carlos E and Martin, Alexander and Weismantel, Robert},
	journal={SIAM J. Optim.},
	volume={6},
	number={3},
	pages={858--877},
	year={1996},
	note={\url{https://doi.org/10.1137/S1052623493254455}},
	publisher={SIAM}
}

@phdthesis{gu1994lifted,
	title={Lifted cover inequalities for 0-1 and mixed 0-1 integer programs},
	author={Gu, Zonghao},
	year={1995},
	school={Georgia Institute of Technology}
}

@article{gu1998lifted,
	title={Lifted cover inequalities for 0-1 integer programs: Computation},
	author={Gu, Zonghao and Nemhauser, George L and Savelsbergh, Martin WP},
	journal={INFORMS J. Comput.},
	volume={10},
	number={4},
	pages={427--437},
	year={1998},
	note={\url{https://doi.org/10.1287/ijoc.10.4.427}},
	publisher={INFORMS}
}

@article{gu1999lifted,
	title={Lifted cover inequalities for 0-1 integer programs: Complexity},
	author={Gu, Zonghao and Nemhauser, George L and Savelsbergh, Martin WP},
	journal={INFORMS J. Comput.},
	volume={11},
	number={1},
	pages={117--123},
	year={1999},
	note={\url{https://doi.org/10.1287/ijoc.11.1.117}},
	publisher={INFORMS}
}

@phdthesis{kubik2009simultaneously,
	title={Simultaneously lifting multiple sets in binary knapsack integer programs},
	author={Kubik, Lauren Ashley},
	year={2009},
	school={Kansas State University}
}

@article{shebalov2006sequence,
	title={Sequence independent lifting for mixed integer programs with variable upper bounds},
	author={Shebalov, Sergey and Klabjan, Diego},
	journal={Math.  Program.},
	volume={105},
	number={2},
	pages={523--561},
	year={2006},
	note={\url{https://doi.org/10.1007/s10107-005-0664-6}},
	publisher={Springer}
}

@article{bellman1957comment,
	title={Comment on Dantzig's paper on discrete variable extremum problems},
	author={Bellman, Richard},
	journal={Oper.  Res.},
	volume={5},
	number={5},
	pages={723--724},
	year={1957},
	note={\url{https://doi.org/10.1287/opre.5.5.723}},
	publisher={INFORMS}
}

@article{atamturk2004sequence,
	title={Sequence independent lifting for mixed-integer programming},
	author={Atamt{\"u}rk, Alper},
	journal={Oper.  Res.},
	volume={52},
	number={3},
	pages={487--490},
	year={2004},
	note={\url{https://doi.org/10.1287/opre.1030.0099}},
	publisher={INFORMS}
}

@Inbook{kellerer2004multidimensional,
	author="Kellerer, Hans
	and Pferschy, Ulrich
	and Pisinger, David",
	title="Multidimensional Knapsack Problems",
	bookTitle="Knapsack Problems",
	year="2004",
	publisher="Springer",
	address="Berlin, Heidelberg",
	pages="235--283",
	note={\url{https://doi.org/10.1007/978-3-540-24777-7_9}},
}

@article{atamturk2003facets,
	title={On the facets of the mixed--integer knapsack polyhedron},
	author={Atamt{\"u}rk, Alper},
	journal={Math.  Program.},
	volume={98},
	number={1},
	pages={145--175},
	year={2003},
	note={\url{https://doi.org/10.1007/s10107-003-0400-z}},
	publisher={Springer}
}

@article{narisetty2011lifted,
	title={Lifted tableaux inequalities for 0-1 mixed-integer programs: A computational study},
	author={Narisetty, Amar K and Richard, Jean-Philippe P and Nemhauser, George L},
	journal={INFORMS J. Comput.},
	volume={23},
	number={3},
	pages={416--424},
	year={2011},
	note={\url{https://doi.org/10.1287/ijoc.1100.0413}},
	publisher={INFORMS}
}

@article{gu1999lifted2,
	title={Lifted flow cover inequalities for mixed 0-1 integer programs},
	author={Gu, Zonghao and Nemhauser, George L and Savelsbergh, Martin WP},
	journal={Math.  Program.},
	volume={85},
	pages={439--467},
	year={1999},
	note={\url{https://doi.org/10.1007/s101070050067}},
	publisher={Springer}
}

@article{richard2002lifted,
	title={Lifted inequalities for 0-1 mixed integer programming: Basic theory and algorithms},
	author={Richard, J-PP and de Farias Jr, Ismael R and Nemhauser, George L},
	journal={Math.  Program.},
	volume={98},
	number={1},
	pages={89--113},
	year={2003},
	note={\url{https://doi.org/10.1007/s10107-003-0398-2}},
	publisher={Springer}
}

@article{richard2003lifted,
	title={Lifted inequalities for 0-1 mixed integer programming: Superlinear lifting},
	author={Richard, J-PP and de Farias Jr, Ismael R and Nemhauser, George L},
	journal={Math.  Program.},
	volume={98},
	number={1},
	pages={115--143},
	year={2003},
	note={\url{https://doi.org/10.1007/s10107-003-0399-1}},
	publisher={Springer}
}

@article{richard2010lifting,
	title={Lifting inequalities: A framework for generating strong cuts for nonlinear programs},
	author={Richard, Jean-Philippe P and Tawarmalani, Mohit},
	journal={Math.  Program.},
	volume={121},
	pages={61--104},
	year={2010},
	note={\url{https://doi.org/10.1007/s10107-008-0226-9}},
	publisher={Springer}
}

@article{nguyen2018deriving,
	title={Deriving convex hulls through lifting and projection},
	author={Nguyen, Trang T and Richard, Jean-Philippe P and Tawarmalani, Mohit},
	journal={Math.  Program.},
	volume={169},
	pages={377--415},
	year={2018},
	note={\url{https://doi.org/10.1007/s10107-017-1138-3}},
	publisher={Springer}
}

@article{chung2014lifted,
	title={Lifted inequalities for mixed-integer bilinear covering sets},
	author={Chung, Kwanghun and Richard, Jean-Philippe P and Tawarmalani, Mohit},
	journal={Math.  Program.},
	volume={145},
	number={1},
	pages={403--450},
	year={2014},
	note={\url{https://doi.org/10.1007/s10107-013-0652-1}},
	publisher={Springer}
}

@phdthesis{gupte2012mixed,
	title={Mixed integer bilinear programming with applications to the pooling problem},
	author={Gupte, Akshay},
	year={2012},
	schppl={Georgia Institute of Technology}
}

@article{gu2023lifting,
	title={Lifting convex inequalities for bipartite bilinear programs},
	author={Gu, Xiaoyi and Dey, Santanu S and Richard, Jean-Philippe P},
	journal={Math.  Program.},
	volume={197},
	number={2},
	pages={587--619},
	year={2023},
	note={\url{https://doi.org/10.1007/s10107-021-01759-3}},
	publisher={Springer}
}

@article{fawzi2022lifting,
	title={Lifting for simplicity: Concise descriptions of convex sets},
	author={Fawzi, Hamza and Gouveia, Joao and Parrilo, Pablo A and Saunderson, James and Thomas, Rekha R},
	journal={SIAM Rev.},
	volume={64},
	number={4},
	pages={866--918},
	year={2022},
	note={\url{https://doi.org/10.1137/20M1324417}},
	publisher={SIAM}
}

@article{gomez2018submodularity,
	title={Submodularity and valid inequalities in nonlinear optimization with indicator variables},
	author={G{\'o}mez, A},
	journal={Available at Optimization online},
	year={2018}
}

@article{dey2010composite,
	title={Composite lifting of group inequalities and an application to two-row mixing inequalities},
	author={Dey, Santanu S and Wolsey, Laurence A},
	journal={Discrete Optim.},
	volume={7},
	number={4},
	pages={256--268},
	year={2010},
	note={\url{https://doi.org/10.1016/j.disopt.2010.06.001}},
	publisher={Elsevier}
}

@article{atamturk2011lifting,
	title={Lifting for conic mixed-integer programming},
	author={Atamt{\"u}rk, Alper and Narayanan, Vishnu},
	journal={Math.  Program.},
	volume={126},
	number={2},
	pages={351--363},
	year={2011},
	note={\url{https://doi.org/10.1007/s10107-009-0282-9}},
	publisher={Springer}
}

@article{marchand19990,
	title={The 0-1 knapsack problem with a single continuous variable},
	author={Marchand, Hugues and Wolsey, Laurence A},
	journal={Math.  Program.},
	volume={85},
	pages={15--33},
	year={1999},
	note={\url{https://doi.org/10.1007/s101070050044}},
	publisher={Springer}
}

@phdthesis{martin1999integer,
	title={Integer programs with block structure},
	author={Martin, Alexander},
	year={1999},
	school={TU Berlin}
}

@article{hoffman1991improving,
	title={Improving {LP}-representations of zero-one linear programs for branch-and-cut},
	author={Hoffman, Karla L and Padberg, Manfred},
	journal={ORSA J. Comput.},
	volume={3},
	number={2},
	pages={121--134},
	year={1991},
	note={\url{https://doi.org/10.1287/ijoc.3.2.121}},
	publisher={INFORMS}
}

@article{huangfu2018parallelizing,
	title={Parallelizing the dual revised simplex method},
	author={Huangfu, Qi and Hall, JA Julian},
	journal={Math.  Program.  Comput.},
	volume={10},
	number={1},
	pages={119--142},
	year={2018},
	note={\url{https://doi.org/10.1007/s12532-017-0130-5}},
	publisher={Springer}
}

@misc{cbc,
	author		= "Forrest, John J. and Hafer, Lou and Goncalves, Joao P. and
	Santos, Haroldo G. and	Brito, Samuel S.",
	title			= " coin-or/cbc: Release releases/2.10.10", 
	year			= "2023",
	note			= "\url{https://doi.org/10.5281/zenodo.7843975}"
}

@misc{bolusani2024scip,
	title={The {SCIP} Optimization Suite 9.0}, 
	author={Suresh Bolusani and Mathieu Besanon and Ksenia Bestuzheva and Antonia Chmiela and Jono Mixed integer bilinear programming withion\'{i}sio and Tim Donkiewicz and Jasper van Doornmalen and Leon Eifler and Mohammed Ghannam and Ambros Gleixner and Christoph Graczyk and Katrin Halbig and Ivo Hedtke and Alexander Hoen and Christopher Hojny and Rolf van der Hulst and Dominik Kamp and Thorsten Koch and Kevin Kofler and Jurgen Lentz and Julian Manns and Gioni Mexi and Erik M\"{u}hmer and Marc E. Pfetsch and Franziska Schl\"{o}sser and Felipe Serrano and Yuji Shinano and Mark Turner and Stefan Vigerske and Dieter Weninger and Liding Xu},
	year={2024},
	eprint={2402.17702},
	archivePrefix={arXiv},
	primaryClass={math.OC},
	url={https://arxiv.org/abs/2402.17702}
}
